\newcommand{\Z}{\mathbb{Z}}
\newcommand{\A}{\mathbb{A}}
\newcommand{\C}{\mathbb{C}}
\newcommand{\vp}{\varphi}
\long\def\comment#1\endcomment{}
\renewcommand{\prod}{\Pi}
\newtheorem{thm}{Theorem}[section]
\newtheorem{prop}[thm]{Proposition}
\theoremstyle{definition}
\newtheorem{defn}[thm]{Definition}
\newtheorem{ex}[thm]{Example}
\theoremstyle{remark}
\newtheorem{remark}[thm]{Remark}
\begin{document}
\title[Early proofs of the Nullstellensatz]{Early proofs of Hilbert's Nullstellensatz}

\author{Jan Stevens}
\address{\scriptsize Department of Mathematical Sciences, Chalmers University of
Technology and University of Gothenburg.
SE 412 96 Gothenburg, Sweden}
\email{stevens@chalmers.se}

\begin{abstract}
By Rabinowitsch' trick Hilbert's Nullstellensatz follows from the weak
Nullstellensatz (Rabinowitsch 1929). The weak version can be shown with
elimination theory. Hilbert's original proof  is also based on successive
elimination. Lasker obtained a new proof using  primary decomposition.
We describe these early proofs
and place them in the development  of commutative algebra up to the
appearance of van der Waerden's Moderne Algebra. We also explain Hentzelt's
Nullstellensatz.
\end{abstract}

\subjclass[2020]{14A05, 14-03, 13-03, 01A60, 01A55}
\keywords{Polynomial ideals, primary decomposition, Nullstellensatz,
elimination theory, resultant,  Hilbert, Lasker}
\thanks{}
\maketitle

\section*{Introduction}
Hilbert's theorem of zeros, or Nullstellensatz as it is usually called, 
states that if a polynomial $f \in P=k[X_1, \ldots, X_n]$,  
where $k$ is an algebraically closed field, vanishes in all common zeros
of an ideal $I\subset P$,  
then $f^r\in I$ for some natural number $r$.  Usually the proof is reduced
to a special case,
the weak Nullstellensatz, that an ideal without zeros is the whole ring, by an
argument due to Rabinowitsch \cite{Rab29}. 
The weak Nullstellensatz follows by elimination.
Hilbert's original proof \cite{Hilb93} is also based on elimination.  A different proof based on
primary decomposition is due to Lasker \cite{L}. We place these proofs
 in the early development  of commutative algebra.

Rabinowitsch's proof \cite{Rab29}
appeared just in time to be included in the second volume of 
van der Waerden's  Moderne Algebra 
\cite{MAII}. This book
can be seen as marking the end of the early period.
It made the subject widely known, and in fact it is still 
a good introduction to the results we discuss
in this paper. Afterwards new proofs of the weak Nullstellensatz appeared with
a totally different flavour, like Zariski's proof, based on his lemma that
 if a finite integral domain over a field $K$ is a field then it is an algebraic
extension of $K$ \cite{Zar}. The most common modern proofs are variations of this proof.

Rabinowitsch's half page paper claims in fact to give a (complete) proof of the
Nullstellensatz and does not use the term weak Nullstellensatz. It refers
to Emmy Noether's paper \cite{HN} for the statement that 
an ideal without zeros is the whole ring, with a footnote that it also follows
from Kronecker's elimination theory. 
Both the Hentzelt-Noether and the Kronecker theory are based on successive
elimination of variables. This is also the technique Hilbert uses in his proof;  
he adapts Kronecker's device to the homogeneous case. In line with his intended
application in invariant theory  Hilbert formulates and proves in \cite{Hilb93} 
the Nullstellensatz for homogeneous polynomials.

The Nullstellensatz was instrumental to the creation of
the concept of primary ideal \cite{L}. Lasker's definition is different
from the modern one, which is due to Emmy Noether \cite{N21}. Macaulay paraphrases
Lasker as follows: if the product of two ideals is contained in a primary ideal, and 
 if one does not contain its zero set the other is
contained in the ideal \cite{M16}.  
To be able to work with this definition it is essential to know 
that a  prime ideal consists of all polynomials vanishing on  its  zero set. 
This is a special case of the Nullstellensatz. It is also possible to show the result
directly and use it in turn to prove the Nullstellensatz. Both Lasker \cite{L-err,L}
and Macaulay \cite{M16} do this. 
Their methods are different. Macaulay uses more specific computations
and he uses Kronecker's theory of the resolvent to describe the zero set 
of ideals. Lasker reasons more abstractly.

The Moderne Algebra \cite{MAII} contains 
 a second proof,  
in Chapter 13 on the theory
of polynomial ideals, based on  van der Waerden's earlier paper \cite{vdW26}.
The proof  uses specialisation in fields of algebraic functions and
avoids elimination theory.
In the paperback  edition Algebra II \cite{vdW67} of the Moderne Algebra
the chapter on elimination
theory is eliminated; only the resultant of two polynomials in one variable has been
retained and moved to the first volume \cite{vdW71Alg}.

We witness here
the rise of commutative algebra as separate discipline, in a period which starts
with Noether and ends with Noether, to borrow
from the title of the essay \cite{Gray97}.
An important motivation for Lasker and Macaulay was the generalisation of
Max Noether's ``fundamental theorem on algebraic
functions''.  The first proved special case of the Nullstellensatz (Netto's theorem
\cite{Netto}) follows from Bertini's refinement of Noether's theorem  \cite{BerN}.
The most most far-reaching generalisation is Hentzelt's Nullstellensatz, proved by
Emmy Noether's  first official PhD student
Grete Hermann \cite{Hermann}. Only after this theorem Hilbert's theorem is referred
to as Nullstellensatz.
The influence of Emmy Noether on van der Waerden is well known. She also  
influenced Rabinowitsch: Noether 
spent the winter 1928/29 in Moscow and led a seminar on algebraic geometry
at the Communist Academy 
\cite{Ale36}.

From the early proofs we first give  Rabinowitsch's proof, which does not
prove the weak Nullstellensatz. We describe Kronecker's elimination theory,
from which the result does follow. It also gives ingredients for Hilbert's original
proof of the theorem. These proofs use induction, but otherwise
not more than the basic properties of the resultant of two binary forms. 
We recall those in the Appendix.
Less elementary are the proofs using primary decomposition.
We describe the background needed. The last proof is van der Waerden's proof
by specialisation. Finally we formulate but do not prove Hentzelt'z Nullstellensatz.
Before discussing the proofs we place them in a historic context and describe the
main characters and 
main developments, from Noether's fundamental theorem until Noether's
modern algebra.

\section{Notation}
We use modern terminology, in particular the word ideal for
the older term module, or modular system. Lasker \cite{L} uses the term module
for ideals in $\C[x_1,\dots,x_n]$, whereas his ideals are ideals in $\Z[x_1,\dots,x_n]$.
The use of ideal was propagated by Emmy Noether, see \cite[Footnote 11]{HN}. 
The name modular system is explained by the notation  introduced by
Kronecker \cite[\S 20]{Kronecker}. He writes
\[
G \equiv 0 \bmod (F_1,F_2,\dots,F_k)
\]
to express that a polynomial $G$ can be written in the form
\[
P_1F_1 + P_2F_2 + \dots + P_kF_k\;,
\]
that is,  $G\in  (F_1,F_2,\dots,F_k)$.
This is an extension of the notation $a\equiv b \bmod c$ introduced by Gauss for integers
to express that $a-b$ is divisible by $c$
without remainder. 

We  do not use the arithmetic 
terminology that an ideal $a$ divides $b$ if $b \subset a$. Macaulay \cite{M16} 
says in this case that $b$ contains $a$.

The term polynomial is used for the the older term  whole rational function. Homogeneous
polynomials will also be called forms. The coefficients will be taken in $K$, an 
algebraically closed field. In the older literature the base field is tacitly assumed to be
that of the complex numbers. Most proofs work in the general case.

We have tried to use a uniform notation. 
In particular, following Emmy Noether we use a Fraktur font to denote
ideals. Hermann Weyl addressed Noether at her funeral service:
``Mit vielen
kleinen deutschen Buchstaben hast Du Deinen Namen in die Geschichte der
Mathematik geschrieben''\footnote{With many German lower-case letters you wrote your name in the history of mathematics.} (this is the version   
Grete Hermann gives in a letter  to van der Waerden in 1982 \cite[p. 20]{Roquette};
the originaltext of the speech is also in loc. cit.).

Throughout this paper we use the name Nullstellensatz, although it is
of relatively recent date, see the MathOverflow question \cite{391718}.
Before \cite{vdW26} the theorem was referred to 
 as a known theorem of Hilbert, or  an explicit  reference was given to \cite[\S 3]{Hilb93}.
Macaulay \cite{M16} calls it the  Hilbert-Netto Theorem.
Interestingly van der Waerden refers to it as  a well known theorem of Hilbert in the paper
 \cite{vdW31} (an extract of a letter to  J. F. Ritt), which has the footnote
 ``See Macaulay, Modular Systems, p. 46. (J. F. R.)''. The first mention of a Nullstellensatz
concerns that of Hentzelt, in   Noether's
report on the thesis  of Grete Hermann (Noether, 1.2.1925, Promotionsakte Hermann)
\cite[p. 320]{Koreuber}.
 
The use of the German name originates in the US. Emmy Noether advised her students
``don't bother to translate, just read the German'', according to  Ruth McKee
\cite{Kee}.
Hodge and Pedoe  \cite{HP} use the term Hilbert's zero-theorem, but 
Zariski states in his review in Math Reviews [MR0028055] that they prove Hilbert's Nullstellensatz.
 Miles Reid   explains the name  as theorem of zeros, adding:
``but stick to the German if you don't want to be considered as  ignorant peasant''
 \cite{Miles}.

\section{From Noether to Noether}

\subsection{Noether's fundamental theorem}
Lasker spends nine pages of his paper \cite{L} to give an overview of the development of ideal theory. It was Max Noether who with his  fundamental theorem ``clearly and sharply
apprehended and demonstrated the central position, 
which the method of modular systems has in all questions of algebra'' \cite[p. 44]{L}.

Max Noether's fundamental theorem on
algebraic functions  \cite{N73} concerns plane curves.
The equation of an algebraic curve $f=0$ passing through the
intersection of two curves $\varphi=0$, $\psi=0$ can be written in the form
$f=A\varphi+B\psi$, if the intersections are simple, but as Noether noticed, the statement 
ceases to be valid otherwise.
An easy example is provided by two conics, tangent in two points.
Then the line passing through these two points 
is not contained in the pencil
spanned by the two conics, but twice the line (given by the square of a linear function) is.
The correct condition for $f$ to lie in the ideal $(\varphi,\psi)$ is that in each of the
intersection points  $f$ can be written as $f=a\varphi+b\psi$ with $a$ and $b$ power series;
it suffices to check this identity up to a certain order $\rho$ only depending
on the ideal $(\varphi,\psi)$, as was first observed by Bertini \cite{Ber}.

In a paper  ``Zur Theorie der Elimination'' \cite{Netto}
Netto gives a different answer
for the case of non-simple intersection, without making a connection 
to Noether's theorem; this is done  by
F. Meyer in his Jahrbuch review [JFM 17.0096.01].
Netto expresses his result geometricallyas follows: if an algebraic curve 
$f(x, y) = 0$ passes through all intersection points of two other algebraic curves 
$\varphi(x, y) = 0$, $\psi(x, y) =0$, then some power of the polynomial $ f(x, y)$ can
be expressed as  linear homogeneous function of $\varphi(x, y)$ and $\psi(x, y)$, i.e.
\[
f(x,y) ^\rho =A(x,y)\varphi(x,y)+B(x,y)\psi(x,y)\;,
\]
where $A(x,y)$ and $B(x,y)$ are also polynomials. As Hilbert \cite{Hilb93} remarks, this is the special case of his Nullstellensatz for two inhomogeneous variables. Netto's proof gives that the
required power is bounded by the highest intersection multiplicity.

To generalise Noether's fundamental theorem  to $n$ dimensions was one of the problems 
van der Waerden worried about when  he came to Göttingen in 1924.
In his paper \cite{vdW75} on the sources of his Moderne Algebra he says 
that a new world opened for him. Already in Amsterdam van der Waerden
``discovered that the real difficulties of algebraic geometry cannot
be overcome by calculating invariants and covariants'' \cite[p. 32]{vdW75}.
In Göttingen he learned from Emmy Noether that suitable tools 
had  been developed by Dedekind
and Weber, by Hilbert, Lasker and Macaulay, by Steinitz and by
Emmy Noether herself. 

Noether's theorem was generalised by König, Lasker and Macaulay.
The most general form is Hentzelt's Nullstellensatz, which provides a criterion indicating
how much a polynomial has to vanish in the zeros of an ideal in order  to
belong to it.  It was proved by Emmy Noether's  first PhD student
Grete Hermann. 
Van der Waerden gives a non-constructive proof in \cite{vdW26}.

\subsection{Kronecker's elimination theory}
The word ideal has its origin in Kummer's work on the
factorisation of algebraic numbers. His theory has been developed 
by Kronecker,  and in a different way by Dedekind. The story is told in detail
by Edwards \cite{Ed80}. A more elementary account is in Klein \cite[Ch. 7]{Klein}.
Kronecker's role in development of the algebraic tools in the period from
Noether to Noether is discussed in  \cite{Gray97}.

Kronecker's theory of algebraic quantities applies not only to number fields
but also to polynomial rings, and in particular to  Elimination Theory.
It gives a method to find all solutions of a set of algebraic
equations by successive eliminations. We describe it in Section \ref{kron-elim-thy}.

Kronecker lectured on his theory and disseminated it in private conversations, but 
 published his results   only in 1882 in a Festschrift
for Kummer's doctor jubilee  
\cite{Kronecker}.
A part of Kronecker's theory was treated in detail in a long paper in Acta Mathematica
by his student Jules Molk \cite{Molk}.
In this paper he mentions the result of Netto, which prompted Netto to publish
his theorem also in Acta Mathematica \cite{Netto}. Netto gives in the second 
volume of his book \cite{Netto-Alg} an extensive account of elimination theory, also of the older work
of Bézout, Liouville, Poisson and Cayley. 
He generalises his theorem from \cite{Netto} to the case of a complete intersection
of $n$ variables, adding that for $n=2$ Noether had shown this in a bit different,
for his purposes less suitable form. Indeed, by Bertini's bound on Noether's theorem
it follows that a sufficiently high  power of the function satisfies the condition
\cite{BerN}. With a flawed argument Netto goes  on to show that the result still holds
if there are more equations than variables, and derives the Nullstellensatz
with Hilbert's argument for zero sets of higher dimensions (see Section \ref {Hilb-proof}).

Kronecker's theory (and much more) was presented in the 564 page book \cite{Koe03} by 
Julius (Gyula) König, published simultaneously in Hungarian \cite{Koe-hun} and German. 
His goal (as stated in the Introduction of \cite{Koe03}) was to  popularise Kronecker's ideas
(``if this expression for this difficult  area of mathematics
is allowed''). He indeed succeeded, as shown by the many comtemporary
references. A footnote to the published paper \cite{M05} of Macaulay's talk at the ICM in Heidelberg (1904)
states that Professor Noether and Professor
Brill have kindly drawn his attention to the recently published book by König. This work
is ``remarkable for its precision and comprehensiveness and the large additions
it makes to the subject''. 
König's book is still mentioned in a footnote in \cite[p. 167]{vdW67}.
Nowadays it is almost completely forgotten. As \cite{Gray97} puts it,
a useful book on a new field will, if succesful, draw others to discover better results,
simpler and more general methods and if it does not become a classic the work
will gradually be covered up and forgotten. For a modern reader König's book is hard to read.

König gives applications to geometry, notably a generalisation
of Noether's theorem to higher dimensions.  He also treats several fundamental
results of Hilbert. In particular, he gives  
a new, simpler proof of the Nullstellensatz; but his proof is flawed, as he ``makes
an absurdly false assumption concerning divisibility'' \cite[p. 35]{M16}. Actually,
König gives a faulty proof  of an absurd divisibility statement \cite[p. 399]{Koe03}.

\subsection{Hilbert}
The next important development comes from Hilbert's work on invariant
theory, in his two famous papers in Mathematische Annalen 
\cite{Hilb90, Hilb93}.  Klein \cite[p. 329]{Klein} writes that Hilbert takes up ideas from Kronecker with Dedekind's way
of thinking, and applies them brilliantly on the problems of invariant theory.
Indeed, Hilbert states explicitly in the Introduction of  \cite{Hilb93} that he uses 
methods of the general theory of polynomial ideals, so that the theory of invariants
becomes a striking example  of that theory, just as  cyclotomic
fields constitute a striking example in number theory, where the most important
theorems about general number fields have first been found and proven. 
In the first paper \cite{Hilb90}, where he proves the basis theorem
(stating that every ideal is finitely generated) and his syzygy theorem and introduces
the Hilbert polynomial,
many geometric examples are given, and only in the last section the results
are applied to prove the finiteness of the system of invariants. One of the examples
solves a problem of Salmon.
Lasker  points to the influence of the work of Salmon and Cayley and comments:
\begin{quote}
Man hat Salmons Werk unterschätzt, weil seinen Methoden die Strenge der 
Beweisführung abging. Wie gro\ss\ dieser Fehler auch sein mag, so darf man niemals
die Bedeutung Salmons als des gro\ss en Problemstellers und Wegweisers vergessen.%
\footnote{Salmon's work has been underestimated because his methods lacked  rigor of
proof. However great this error may be, one must never
forget the importance of Salmon as the great problem poser and guide.}
(\cite[p. 44]{L})
\end{quote}
At several places Hilbert \cite{Hilb90} stresses the importance of generalising Noether's
fundamental theorem to higher dimensions. 

Hilbert formulates the basis theorem in a different way from what is usual nowadays. 
\begin{thm}
Given a non-terminating sequence of forms in the $n$ variables
$x_1,\dots,x_n$, say $F_1,F_2,F_3,\dots$,  there always exists a number 
$m$ such that every form of the sequence can be written as
\[
F=A_1F_1+A_2F_2+\dots+A_mF_m\;
\]
where $A_1,A_2,\dots,A_m$ are suitable forms in the same $n$ variables.
\end{thm}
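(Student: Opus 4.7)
I would prove this by induction on the number of variables $n$. The base case $n=0$ is trivial: forms are scalars, so taking $m=1$ with any nonzero $F_j$ works (and $m=0$ suffices if every $F_j$ vanishes).

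For the inductive step, suppose the theorem holds for sequences of forms in $n-1$ variables, and let $F_1, F_2, \dots$ be a sequence of forms in $x_1, \dots, x_n$. We may assume $F_1 \ne 0$; let $d = \deg F_1$. Because $k$ is infinite (being algebraically closed), the polynomial $F_1(x_1, \dots, x_{n-1}, 1)$ is nonzero, so after a linear change of coordinates $x_i \mapsto x_i + c_i x_n$ for $i < n$ and a rescaling, $F_1$ becomes monic in $x_n$ of degree $d$ with coefficients in $k[x_1, \dots, x_{n-1}]$. For each $i \ge 2$ carry out polynomial long division of $F_i$ by $F_1$ in the variable $x_n$, obtaining $F_i = Q_i F_1 + R_i$ with $\deg_{x_n} R_i < d$; because $F_1$ and $F_i$ are both homogeneous and every reduction step subtracts a homogeneous multiple of $F_1$ of the appropriate total degree, $Q_i$ and $R_i$ come out as forms. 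Expand $R_i = \sum_{k=0}^{d-1} R_{i,k}(x_1, \dots, x_{n-1}) \, x_n^k$ with each $R_{i,k}$ a form in $n-1$ variables, and apply the inductive hypothesis separately to each sequence $(R_{i,k})_{i \ge 2}$ to obtain bounds $m_k$. Setting $m = \max_{0 \le k < d} m_k$ and combining over $k$ expresses every $R_i$ as a form-combination of $R_2, \dots, R_m$; substituting this back into $F_i = Q_i F_1 + R_i$ yields the desired presentation $F_i = A_1 F_1 + \dots + A_m F_m$ with forms $A_j$.

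The principal obstacle is the homogeneous long-division step: one must verify that at every stage of the recursion the current remainder is homogeneous, that its leading coefficient in $x_n$ is a form of the complementary total degree, and hence that the monomial multiple of $F_1$ subtracted to kill the top $x_n$-term is itself a form of the right degree; then $Q_i$ and $R_i$ are forms rather than arbitrary polynomials. The only other nontrivial ingredient is the coordinate change making $F_1$ monic in $x_n$, which depends on the base field being infinite; thereafter the inductive assembly over the $d$ sequences $(R_{i,k})_i$ is essentially bookkeeping.
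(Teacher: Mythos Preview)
The paper does not actually prove this theorem; it states it as Hilbert's formulation of the basis theorem and only remarks that it follows from the modern version (every ideal in $K[x_1,\dots,x_n]$ is finitely generated), since a finite generating set of the ideal $(F_1,F_2,\dots)$ involves only finitely many $F_j$. Your direct induction on $n$ is closer in spirit to Hilbert's own argument, but there is a real gap at the step ``combining over $k$ expresses every $R_i$ as a form-combination of $R_2,\dots,R_m$''. The inductive hypothesis gives, for each $k$ separately, forms $A_{i,j}^{(k)}$ in $n-1$ variables with $R_{i,k}=\sum_{j} A_{i,j}^{(k)} R_{j,k}$; summing yields
\[
R_i=\sum_k\sum_j A_{i,j}^{(k)} R_{j,k}\,x_n^k,
\]
which is \emph{not} of the shape $\sum_j B_j R_j$ unless the $A_{i,j}^{(k)}$ can be taken independent of $k$. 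Concretely, with $n=2$, $F_1=x_2^2$, $R_2=x_1^2+x_1x_2$, and $R_i=x_1^{\,i}$ for $i\ge3$, both one-variable sequences $(R_{i,0})_{i\ge2}=(x_1^2,x_1^3,\dots)$ and $(R_{i,1})_{i\ge2}=(x_1,0,0,\dots)$ are generated already by their first term, so your recipe outputs $m=2$; yet $R_3=x_1^3$ is not a form-multiple of $R_2=x_1^2+x_1x_2$.

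The standard repair is to strengthen the inductive statement from ideals to submodules of a finite free module: prove the analogue for sequences of $d$-tuples of forms in $n-1$ variables. Applied to the tuples $(R_{i,0},\dots,R_{i,d-1})$ this produces common coefficients $B_{i,j}$ working for all $k$ simultaneously, after which your back-substitution into $F_i=Q_iF_1+R_i$ is valid. Hilbert's own proof reaches the same end by a different route, applying the inductive hypothesis to leading coefficients in $x_n$ and iterating down through the possible $x_n$-degrees one layer at a time.
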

Hilbert also  gives a second version of the basis theorem, 
for forms with integral coeffcients.
Hilbert's formulation seems natural if one thinks about the explicit computation of invariants
in special cases,
which leads to   lists. Moreover, 
Hilbert treats only homogeneous polynomials, or forms, whereas the modern formulation
works with inhomogeneous polynomials. The theorem can be extended to the inhomogeneous
case by making all polynomials homogeneous with a new variable of homogeneity
\cite[p. 38]{M16}.

Hilbert explicitly states  that the basis theorem applies in particular to
homogeneous ideals in polynomial rings; he uses Dedekind's term module. 
Hilbert makes the connection with Kronecker's concept of modular systems,
but stresses that his syzygy theory and the characteristic function (in modern
terms the Hilbert polynomial) use homogeneity in an essential way.

Conversely  the basis theorem for ideals, that every ideal in the polynomial ring $K[x_1,\dots,x_n]$ is finitely generated,  implies the   theorem in Hilbert's formulation
\cite[\S\ 80]{MAII}, \cite[\S\ 115]{vdW67}: the ideal generated by the $F_i$ has
a finite set of generators, each of which is a linear combination of only finitely many
$F_j$.

The basis theorem is the first step in proving that the ring of invariants is 
finitely generated. The invariants in question concern in modern terms
the action of the group $G=SL_n(\mathbb C)$ on a vector space $V$ which is a direct sum
of the type $S^{d_1}{\mathbb C^n}
\oplus \dots\oplus S^{d_k}{\mathbb C^n}$. The result is that  the ring
$\mathbb C[V]^G$ of $G$-invariant polynomials on $V$ is finitely generated.
By the basis theorem every invariant $i$ can be expressed as
$i=A_1i_i+\dots A_mi_m$. By an averaging procedure the $A_i$ can themselves
be taken as invariants, of lower degree than $i$.  By applying the same reasoning
to these invariants one finally obtains that $i$ is a sum of products of the $i_j$.
Nowadays one uses the Reynolds operator, which is a $G$-invariant projection
$\mathbb C[V] \to \mathbb C[V]^G$, but Hilbert had to construct it
using Cayley's $\Omega$-process; for details we refer to \cite{Derksen}.

Hilbert's proof was criticised for its nonconstructive
character. The goal of \cite{Hilb93} is to give a method to find the generators
(in principle). 
By the
homogeneous  version of Noether normalisation (proved by Hilbert for this purpose)
the ring of invariants is an integral extension of a polynomial ring $k[J_1,\dots,J_\kappa]$
with the $J_i$ invariants (of the same degree). The quotient field of the ring of invariants
is the field of rational invariants and by the theorem of the primitive element
it is an extension of the field $L=K(J_1,\dots,J_\kappa)$ of the form $L(J)$ with 
$J$ a polynomial invariant; Hilbert shows how to construct such a $J$. 
To find the ring of invariants  Hilbert gives three steps, of which the first is the 
most difficult, namely to find the system $\{J_1,\dots,J_\kappa\}$ of invariants, such that 
every  other invariant is integral over the $J_1,\dots,J_\kappa$, that is, satisfies a monic equation with coefficients which are polynomials 
in the $J_1,\dots,J_\kappa$. The second step is to find $J$, such that all invariants
are rational functions of $J,J_1,\dots,J_\kappa$. The third step is to find the 
integral elements of the field $L(J)$, which can be done according to a general theory
of Kronecker: ``If the  invariants $J, J_1, \dots, J_\kappa$ are known,  finding the full invariant system only requires the solution of an elementary problem from the arithmetic theory of algebraic functions'' \cite[p. 320]{Hilb93}.

The system $\{J_1,\dots,J_\kappa\}$ has the property that all invariants vanish if the
$J_1,\dots,J_\kappa$ vanish. Of fundamental importance for the whole theory is that
the converse can be proved: if invariants $I_1,\dots,I_\mu$ have the property that their
vanishing implies the vanishing of all other invariants, then every invariant is integral
over the $I_1,\dots,I_\mu$. To prove this Hilbert first shows the Nullstellensatz
(see Theorem \ref{Hilb-NSS} for Hilbert's version). This gives that  powers of the
generators of the ring of invariants lie in the ideal $(I_1,\dots,I_\mu)$ and therefore
every invariant of  degree at least some fixed $\rho$. The coefficients can again be taken as 
invariants. A finite number invariants of lower degree form therefore a basis of the ring
of invariants as $K[I_1,\dots,I_\mu]$-module, so this ring is integral over 
$K[I_1,\dots,I_\mu]$. Hilbert shows this with the now standard determinant trick.

A form for which all invariants vanishes is called a null-form. In the space of all forms they 
form an algebraic subset, and knowing it helps determining the invariants  $(I_1,\dots,I_\mu)$. For binary forms Hilbert determines the null-forms with elementary means:
a form $f(x,y)$ of degree $d$ is a null-form  if and only if $f$ has a zero of multiplicity
bigger than $\frac d2$.  This can easily shown with the Hilbert-Mumford criterion,
see \cite[Example 2.5.4]{Derksen}; Hilbert proved the criterion later in his paper to handle
forms of more variables. In fact,
this part of the theory was only taken up 70 years later by Mumford in his
Geometric Invariant Theory \cite{Mum}. For these developments and 
their relation to Hilbert's text we refer to the comments
by V.L. Popov to his Russian translation of Hilbert's paper \cite{popov}.

\subsection{Lasker}

Little is known about the origins of the   highly original 
paper ``Zur Theorie der Moduln and Ideale'' \cite{L}, by the world chess champion 
Emanuel Lasker.
Van der Waerden \cite{vdW75} states that Lasker took his Ph.D. degree
under Hilbert's guidance in 1905, but that is not correct.

Lasker (1868--1941) studied mathematics from 1888 in Berlin and later in Göttingen
(when Hilbert still was in Königsberg),
but 1891 he interrupted his studies and concentrated on chess, becoming world
champion in 1894. He took up his studies again in 1897, first in Heidelberg (taking 
courses with Landsberg)
and later in Berlin (courses with Hensel) \cite{rosen}.

Lasker submitted a manuscript for the Grand prix des sciences mathématiques
in 1898, where the question was 
``Chercher à étendre le rôle que peuvent jouer en analyse les séries divergentes'',
but it was considered to be a bit beside the question \cite{prix}.
He used the first 23 pages of this manuscript to get a doctoral degree. Max Noether  in Erlangen was prepared to help him.  
Staying at the Hotel  Erlanger Hof Lasker wrote to the dean on Monday January 29, 1900,
who convened the examining committee for the next Wednesday. On the same Monday
Noether already delivered his report. Lasker passed magma cum laude
\cite{Ullrich}.
Lasker submitted the paper 
to the Philosophical Transactions of the Royal Society of London, where it was
published in 1901 in German (Über Reihen auf der Convergenzgrenze) \cite{L01}.
So Lasker was neither a student of Hilbert nor of Noether.

Lasker wrote a small paper \cite{L04} on the theory of canonical forms, dated
New York  May 1903.
His main mathematical work \cite{L} might have been an attempt to start
an academic career;  he never had a permanent 
academic position in mathematics. The paper is dated Charlottenburg
March 1904. Right after Lasker travelled to the US to play the chess tournament
at Cambridge Springs.

Albert Einstein came to know Lasker in later life. He wrote on occasion of Lasker's sixtieth
birtday:
\begin{quote}
``Emanuel Lasker ist einer der stärksten Geister, denen ich auf meinem Lebenswege begegnet bin. Renaissance-Mensch, mit einem unbändigen Freiheitsdrang begabt, jeder sozialen Bindung abhold. So wurde er Schachmeister, wohl weniger aus besonderer hingebender Liebe zum Spiel. Letztere galt vielmehr der Philosophie, dem Verstehen überhaupt. Er liebt als ächter Eigenbrödler und Eigenwilliger die Deduktion und steht der induktiven Forschung fremder gegenüber. Kein Wunder es liegt ihm nicht, im Objekt den Richter über die Kinder seines Geistes zu sehen, sondern die Schönheit des Gedankens geht ihm über jene Wahrheit, die ihren Anspruch aus der Beobachtung des Objektes ableitet. Der Amor dei intellektualis 
ist sein einziger Gott, verkörpert in Mathematik und spekulativer Philosophie. Ich liebe seine Schriften unabhängig von ihrem Wahrheitsgehalt als die Früchte eines grossen originalen und freien Geistes.''\footnote{
Emanuel Lasker is one of the strongest minds I have encountered in the course of my life. A Renaissance man, gifted with a boundless desire for freedom, averse to any social obligation. Thus he became a chess master probably not so much because of any particular devoted love for the game. What he loves, rather, is philosophy, understanding in general. As a true maverick with a mind of his own, he loves deduction, and inductive research is foreign to him. That is not surprising he does not see the object as the judge of his mind’s offspring instead, for him the beauty of the idea is more important than the truth, which derives its claim from the observation of the object. The amor dei intellectualis is his sole god, embodied in mathematics and speculative philosophy. I love his writings independently of their truth content, as the product of a great original and free mind.} (\cite{einstein})
\end{quote}

Lasker's paper \cite{L} 
is famous for the introduction of primary ideals, but contains much more.
It  does not have an Introduction, but from the ``final remarks
 about some applications of the Theorems'' we can conclude that the 
 main objective of the theory is the extension of Noether's fundamental theorem
 to the case of several variables.
The paper contains a new approach to the Hilbert polynomial, 
based on multiplication with non-zero divisors; the Hilbert polynomial
is used for a new proof of the Nullstellensatz. 
There is also an extension
of the theory to the ring of convergent power series in several variables,
which  is used to prove Lasker's generalisation of Noether's theorem.
The last application sketched in the paper concerns Plücker formulas for curves
with arbitrary singularities.

\subsection{Macaulay}
Lasker proved that an ideal is the intersection of primary ideals, but gave no methods
to compute these. This was the goal of Macaulay's  paper \cite{M13}. 
F. S. Macaulay (1862--1937) was a school teacher 
until his retirement in 1911. For his mathematical work see \cite{EG23}.
  
While in \cite{M13} Macaulay uses 
the theories of Kronecker, Hilbert and Lasker, the goal of first three chapters of his 1916 Tract \cite{M16} is to 
present them.
In the preface Macaulay writes:
\begin{quote}
The present state of our knowledge of the properties of Modular Systems is chiefly due to the fundamental theorems and processes of L. Kronecker, M. Noether, D. Hilbert, and E. Lasker, and above all to J. König's profound exposition and numerous extensions of Kronecker's theory. (\cite[Preface]{M16})
\end{quote}

In this slim volume 
Macaulay only
treats the case of polynomial ideals in $\C[x_1,\dots,x_n]$, what he calls the algebraic
theory of modular systems; the ``absolute theory'' concerns the case of integer coefficients.
This is the same distinction as Lasker makes between modules and ideals.
The last chapter of the Tract introduces
the Inverse system, according to Paul Roberts (in his Introduction to the 1994 reprint)
one of the most original ideas in the book. A simplified treatment is given in
Macaulay's last paper \cite{M34} (not mentioned by Roberts). 
In her Zentralblatt review [Zbl. 0008.29105] Emmy Noether 
writes, among other things, that again the many examples and counter-examples
are important.

Macaulay's Tract was one of the works Emmy Noether advised B. L. van der Waerden
to study when he came to Göttingen in 1924.  It is the direct source for several sections in
Moderne Algebra, according to \cite{vdW75}. Elsewhere van der Waerden recollects: 
\begin{quote}
Most important
work on the theory of Polynomial Ideals was done by \textsc{Lasker}, the famous
chess champion, who had got his problem from \textsc{Hilbert}, and by 
\textsc{Macaulay},
a schoolmaster who lived near Cambridge, England, but who was nearly unknown
to the Cambridge mathematicians when I visited Cambridge in 1933. I guess
the importance of \textsc{Macaulay}'s work was known only in Göttingen. (\cite{vdW71})
\end{quote}

\subsection{Noether}
Meanwhile a different treatment of explicit elimination theory was given 
by Kurt Hentzelt in his 1914 Ph.D. thesis 
``Zur Theorie der Polynomideale und Resultanten'' under E. Fischer.
Kurt Hentzelt (1889--1914, he went missing in 
action near Diksmuide) studied first in Berlin and then 1919-1913 in 
 Erlangen \cite[p. 320]{Koreuber}.
Presumably he was also a student of Emmy Noether \cite[p. 15]{Koreuber}.
She acknowledges him in her paper on ``fields and systems of rational
functions'' (dated May 1914, before the start of the First World War) \cite{N15}.
In \cite{N21} she gives in a footnote the simplest example of a
non-unique  decomposition in primary ideals, adding that she got it from
K. Hentzelt. Noether published a
conceptual version \cite{HN} of Hentzelt's thesis, which she characterises:
\begin{quote}
Diese ganz auf Grund eigener Ideen verfaßte Dissertation ist lückenlos aufgebaut; aber
Hilfssatz reiht sich an Hilfssatz, alle Begriffe sind durch Formeln mit vier und fünf Indizes
umschrieben, der Text fehlt fast vollständig, so daß dem Verständnis die größten Schwierigkeiten
be\-reitet werden.'%
\footnote{This dissertation, entirely based on own ideas, is structured without gaps; but
Lemma follows lemma, all concepts are represented by formulas with four and five subscripts, the text is almost completely absent, so that the greatest difficulties are caused for
understanding.} (\cite[p. 53]{N21})
\end{quote}
The part concerning computation in a finite number of steps was reserved for a
later publication. Noether gave this problem to her first PhD student
Grete Hermann. 

In \cite{vdW26} van der Waerden gives a new foundation for the theory 
of zeros of polynomial ideals, independent of elimination theory.
Even though he later pleaded for
the use of elimination theory in algebraic geometry \cite{vdW54}, van der Waerden
contributed to the elimination of elimination theory.
In \cite{vdW26} he  also gives a new proof of  Hilbert's Nullstellensatz. 
Van der Waerden\cite{vdW71} recalls his use of generic points:
\begin{quote}
I wrote a paper \cite{vdW26} based upon this simple idea and showed it {\sc Emmy Noether}.
She at once accepted it for the Mathematische Annalen, without telling me that
she had presented the same idea in a course of lectures just before I came to
Göttingen. I heard it later from {\sc Grell}, who had attended her course. (\cite{vdW71})
\end{quote}

The above quote shows a greater participation of Emmy Noether  in the
development of algebraic geometry than visible from her published papers. 
Emmy Noether spent the winter 1928/29 in Moscow. She gave a course on abstract
algebra at Moscow University and led a seminar on algebraic geometry
at the Communist Academy (in 1936 merged with the Academy of Sciences of the Soviet Union)
\cite{Ale36}.
It is probable that J. L. Rabinowitsch was one of the participants. 
In her report on the thesis of Hans Fitting \cite[p. 316]{Koreuber} Noether mentions unpublished
work of ``Rabinowitsch-Moskau'' on the subject of \cite{N29}, which probably
her Moscow lectures took up.
Not much is known
about him, but it can be
Juli Lasarewitsch Rabinowitsch (in German transliteration), who was born in 1904
 and graduated from Moscow State University in 1924, where he worked since 1932 and was awarded a Ph.D. and a title of Docent in 1935 \cite{416577}.
 He later mainly worked on Differental Equations, but one can imagine Noether
attracting a rather wide audience.
Curiously van der Waerden always writes A. Rabinowitsch, in his Moderne Algebra
but also in \cite{vdW75}.
Zariski repeats this mistake in \cite{Zar}, which shows that his source is 
van der Waerden's book.

It is not unlikely that Noether took Rabinowitsch' paper back to Germany and arranged for
its publication in Mathematische Annalen, and that she provided the references.

\section{Rabinowitsch' paper}
The text of \cite{Rab29} consists of only 13 lines.
Figure \ref{rab-paper} shows it in translation.

\begin{figure}
\noindent\fbox{
\begin{minipage}{0.97\textwidth}
\bigskip
\renewcommand{\thempfootnote}{\arabic{mpfootnote}}
\centering
\begin{minipage}{0.9\textwidth}
\setlength{\parindent}{1.5em}

\centerline{\bf On Hilbert's Nullstellensatz.} 
\centerline{By}
\centerline{J. L. Rabinowitsch  in Moscow.}

\bigskip
\indent Theorem:
\textit{If the polynomial $f(x_1,x_2,\dots,x_n)$ vanishes in all zeros
--- in an algebraically closed field --- of a polynomial ideal
$\mathfrak{a}$, then there is a power $f^{\rho}$ of $f$ 
belonging to $\mathfrak{a}$.}
\medskip

Proof:
Let $\mathfrak{a}=(f_1,\dots,f_m) $, where $f_i$ contain the variables
$x_1,\dots,x_n$. Let $x_0$  be an auxiliary variable. 
We form the ideal $\bar{\mathfrak{a}}=(f_1,\dots,f_m,
x_0f-1)$. As  by assumption $f=0$ whenever all $f_i$ vanish, the ideal
$\bar{\mathfrak{a}}$ has no zeros.

Therefore $\bar{\mathfrak{a}}$ has to coincide with
 the unit ideal.
 (Cf. for example K. Hentzelt, ``Eigentliche Eliminationstheorie'',
 \S\ 6, Math. Annalen \textbf{88}\footnote{Follows also already from Kronecker's
 elimination theory.}.)
 If then $1=\sum_{i=1}^m F_i(x_0,x_1,\dots,x_n)f_i+F_0(x_0f-1)$
 and  we put 
 $x_0=\frac 1f$ in this identity, so results:
\[
1= \sum_{i=1}^m F_i\left(\frac 1f,x_1,\dots,x_n\right)f_i = 
\frac{\sum_{i=1}^m \bar{F}_if_i} {f^\rho}\;.
\]
Therefore $f^{\rho}\equiv 0\pmod{ \mathfrak a}$, q.e.d.
\bigskip
\end{minipage}
\bigskip

\centerline{\small(Received on 8. 5. 1929)}
\medskip
\end{minipage}}
\caption{Rabinowitsch's paper \cite{Rab29}}
\label{rab-paper}
\end{figure}

The statement  that a nontrivial ideal has zeros is nowadays
called weak Nullstellensatz, but in the text and earlier it is not considered to
be a special case. The proof of the result
can be found in many places (e.g. \cite[p. 21]{M16}), 
but  the connection with Hilbert's theorem
is not  made. 
Only in \cite{M13} Macaulay  gives a footnote to the statement
that the only ideal without zeros is the unit ideal, where he 
writes that this follows as a particular case of the theorem
in \cite[\S3]{Hilb93} but was known earlier from the theory of the resultant. 
In the accounts of Kronecker's elimination theory by Kronecker himself
\cite{Kronecker}, Molk \cite{Molk}, Netto \cite{Netto-Alg} and König \cite{Koe03}
the conclusion is not drawn.
In \cite{HN} we find 
in \S 6 on p. 76 the statement
 that Theorem XII of that paper  in particular shows that every ideal
without zeros becomes the unit ideal,  again without mentioning the
Nullstellensatz.

\section{Kronecker's elimination theory}\label{kron-elim-thy}
In this  section we explain Kronecker's theory, following
the account given by Macaulay \cite{M16}. We show in particular how it
implies the weak Nullstellensatz.

Let $F_1,\dots,F_k\in K[x_1,\dots,x_n]$ with $K$ an algebraically closed field
and consider the equations $F_1=\dots=F_k=0$.
 The problem is to find all solutions.
Coincidences due to special values of the coefficients, like equations not being 
regular (a polynomial of degree $l$ is regular in $x_1$ if the monomial $x_1^l$ occurs with 
non-zero coefficient), can be avoided by a (general)  linear change of coordinates.
As Macaulay remarks this transformation is seldom needed in specific examples,
but always assumed in theoretical reasoning. 

If  the
polynomials $F_i$ have a non-constant   greatest common divisor $D$, then the  
hypersurface $D=0$ gives solutions of the equations.
This greatest common divisor
can be found with the Euclidean algorithm, by considering
the polynomials as elements of $K(x_2,\dots,x_n)[x_1]$.
If $D$ is a constant we take it equal to $1$.
We divide the $F_i$ by $D$ and write $F_i=D\phi_i$.  We eliminate
 $x_1$ from the equations $\phi_1=\dots=\phi_k=0$, using the following device.
We form the expressions
 \begin{align*} 
\Phi_1 &=  u_1\phi_1+\dots +u_m\phi_m \\ 
\Phi_2 &=   v_1\phi_1+\dots +v_m\phi_m
\end{align*}
with the $u_i$ and $v_i$ indeterminates, and write the resultant $R(\Phi_1,\Phi_2)$
 (see Appendix \ref{appendix})
as 
\[
w_1 F_1^{(1)}+w_2 F_2^{(1)}+\dots +w_{k_1} F_{k_1}^{(1)}\;,
\]
where the $w_i$ are monomials in the $u_j$ and $v_j$ and $F_i^{(1)}\in 
K[x_2,\dots,x_n]$. Any solution of $F_1=\dots=F_k=0$ is a solution of $D=0$ or of
$\phi_1=\dots=\phi_k=0$, and any solution of $\phi_1=\dots=\phi_k=0$ is a solution of
$F_1^{(1)}=\dots= F_{k_1}^{(1)}=0$, since by the property \ref{res-in-id}
of the resultant 
 $\sum w_i F_i^{(1)}= 
A_1 \Phi_1 +A_2 \Phi_2$; by equating coefficients of the $w_i$ we conclude that
$F_i^{(1)}\in (\phi_1,\dots,\phi_k)$. Therefore $D\,F_i^{(1)}\in (F_1,\dots,F_k)$.
Conversely, if $(\xi_2,\dots,\xi_n)$ is a solution of $F_1^{(1)}=\dots= F_{k_1}^{(1)}=0$,
then the resultant $R(\Phi_1,\Phi_2)$ vanishes for 
$(x_2,\dots,x_n)=(\xi_2,\dots,\xi_n)$ and the equations $\Phi_1=\Phi_2=0$ have
a solution $(\xi_1,\xi_2,\dots,\xi_n)$ (and we find all solutions). As $(x_1-\xi_1)$ 
is a factor of $\Phi_1(x_1,\xi_2,\dots,\xi_n)$, it does not depend on the $v_i$; 
nor does it depend on the $u_i$, being a factor of $\Phi_2$.  Therefore 
$(\xi_1,\xi_2,\dots,\xi_n)$ is a solution of  $\phi_1=\dots=\phi_k=0$, so also of
$F_1=\dots=F_k=0$.

We may assume that the $F_i^{(1)}$ are regular in $x_2$; the needed linear
transformation could have been performed at the start.  We apply the same procedure
and find the greatest common divisor  $D^{(1)}$ of the $F_i^{(1)}=D^{(1)}\phi_i^{(1)}$
considered as polynomials in $x_2$, and eliminate $x_2$ to get  polynomials
$F_1^{(2)},\dots, F_{k_2}^{(2)}$ in $x_3,\dots,x_n$.
 Any solution of $F_1=\dots=F_k=0$ is a solution of $D\,D^{(1)}=0$ or of 
 $F_1^{(2)}=\dots= F_{k_2}^{(2)}=0$  and $D\,D^{(1)}F_i^{(2)}\in (F_1,\dots,F_k)$

We continue and successively find $D^{(j)}$ and eliminate $x_{j+1}$.
After eliminating $x_{n-1}$ we have polynomials $F_i^{(n-1)}$ in one variable
$x_n$, with greatest common divisor  $D^{(n-1)}$ and after dividing with this common factor
the polynomials $\phi_i^{(n-1)}$ have no common root.
We find that any solution of $F_1=\dots=F_k=0$
is a solution of the single equation $D D^{(1)}\dots D^{(n-1)}=0$. Conversely,
from the solutions of $D D^{(1)}\dots D^{(n-1)}=0$ we can find all solutions
of $F_1=\dots=F_k=0$. As the $D D^{(1)}\cdots D^{(n-2)}F_i^{(n-1)}
=D D^{(1)}\cdots D^{(n-1)}\phi_i^{(n-1)}$ lie in the ideal $(F_1,\dots,F_k)$
and $1\in (\phi_1^{(n-1)},\dots,\phi_k^{(n-1)})$,
we conclude that 
\[
D D^{(1)}\cdots D^{(n-1)}  \in  (F_1,\dots,F_k)\;.
\]

\begin{defn} $D D^{(1)}\cdots D^{(n-1)}$ is the complete (total) resolvent of the
equations $F_1=\dots=F_k=0$, and 
$D^{(i-1)}$ is the complete partial resolvent of rank $i$. Any factor of $D^{(i-1)}$
is a partial resolvent of rank $i$.
\end{defn}

The weak Nullstellensatz follows.
\begin{prop}[{\cite[p. 21]{M16}}]\label{WNSS}
If the equations $F_1=\dots=F_k=0$ have no solution
then the complete resolvent  is equal to $1$ and 
consequently $1\in (F_1,\dots,F_k)$. 
\end{prop}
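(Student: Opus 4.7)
My plan is to argue by contraposition: I will show that if the complete resolvent $R := D\,D^{(1)}\cdots D^{(n-1)}$ is not a nonzero constant, then the system $F_1 = \dots = F_k = 0$ admits a solution in $K^n$. Combined with the already established inclusion $R \in (F_1,\dots,F_k)$, the hypothesis that the system has no solution then forces $R$ to be a nonzero constant, and after normalisation this yields $1 \in (F_1,\dots,F_k)$.

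The key auxiliary fact is the elementary lemma that every non-constant polynomial in $K[y_1,\dots,y_m]$ over an algebraically closed field $K$ has a zero in $K^m$. I would prove this by induction on $m$: write the polynomial as a polynomial in $y_m$ with coefficients in $K[y_1,\dots,y_{m-1}]$, specialise $y_1,\dots,y_{m-1}$ to a point where the leading coefficient is nonzero (such a point exists since $K$ is infinite), and apply the fundamental theorem of algebra to the result. Now suppose $R$ is not a nonzero constant. Since each factor $D^{(i)}$ and $D$ is nonzero by construction, at least one factor is non-constant, and the lemma supplies a zero of it. If $D$ itself vanishes at some $\xi \in K^n$, then $F_i(\xi) = D(\xi)\phi_i(\xi) = 0$ for every $i$ and we are done. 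Otherwise some $D^{(i)}$ vanishes at a point $(\xi_{i+1},\dots,\xi_n) \in K^{n-i}$, at which every $F_j^{(i)}$ also vanishes; invoking the converse step of the elimination procedure already spelled out in the text, we produce $\xi_i$ such that the $\phi_j^{(i-1)}$, and hence the $F_j^{(i-1)}$, all vanish at $(\xi_i,\xi_{i+1},\dots,\xi_n)$. Iterating this lifting through the remaining elimination stages yields $\xi = (\xi_1,\dots,\xi_n)\in K^n$ with $F_1(\xi) = \dots = F_k(\xi) = 0$, contradicting the hypothesis.

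The main obstacle is the repeated lifting: one must check at each stage that the vanishing of the resultant $R(\Phi_1,\Phi_2)$ at $(\xi_{i+1},\dots,\xi_n)$ produces a common root of the $\phi_j^{(i-1)}$ in the variable $x_i$, not merely a common root of the two auxiliary forms $\Phi_1, \Phi_2$. This is exactly the argument used in the construction of the resolvent: since the coefficients $u_j, v_j$ entering $\Phi_1, \Phi_2$ are indeterminates, a common factor $(x_i-\xi_i)$ of $\Phi_1$ and $\Phi_2$ must be independent of them and hence a common factor of all the $\phi_j^{(i-1)}$. Here one simply applies this principle in reverse and iterates it $n$ times, after which the final conclusion $R \equiv 1$ combined with $R \in (F_1,\dots,F_k)$ completes the proof.
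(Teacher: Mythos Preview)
Your proposal is correct and follows essentially the same route as the paper: the proposition is stated there as an immediate consequence of the two facts already established in the preceding discussion of Kronecker's procedure, namely that every zero of the complete resolvent lifts to a common zero of the $F_i$ (the ``converse'' step you re-derive) and that $D D^{(1)}\cdots D^{(n-1)}\in(F_1,\dots,F_k)$. You make explicit the one point the paper leaves tacit---that a polynomial over an algebraically closed field without zeros must be a nonzero constant---but otherwise your contrapositive and iterated lifting simply spell out what the text has already set up.
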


Macaulay continues to show by examples that the resolvent does 
not always detect embedded components or may indicate such when they do not exist.
This problem does not occur with Hentzelt's elimination theory. Noether  \cite{HN}
describes how to form a resultant form with better properties, depending only on the
ideal.  We refer for details to Krull's report in the 1939 edition of the Enzyklopädie
\cite{Kr-enz}. 
Whereas Kronecker's method seeks to solve the equations
$F_1=\dots=F_k=0$, Hentzelt looks for zeros of the ideal 
$\mathfrak a=(F_1,\dots,F_k)$.  
We may suppose that $\mathfrak a$ contains a polynomial
$F$, which is regular in $x_1$ of order $r$. Let $R_1,\dots,R_t$ denote the remainders
obtained by dividing the polynomials $x_1^jF_i$, $j=0,\dots,r-1$, $i=1,\dots,k$,
by $F$, as polynomials in $x_1$. 
Let  $M$ be the set of all polynomials in $\mathfrak a$ with degree less than $r$ in $x_i$.
It is a submodule of the free $K[x_2,\dots,x_n]$-module with basis $1,\dots,x_1^{r-1}$,
with $R_1,\dots,R_t$  generators of $M$.
 The rank of $M$ is less than $r$ if and only if
the polynomials in $I$ have a common factor of positive degree in $x_1$; this holds for
undeterminate $x_2,\dots,x_n$ but also  for specialised values. 
Let $\mathfrak a_1$ be the ideal of the minors of size $r$ of the coefficient matrix  of $M$.
Then $\mathfrak a_1\subset \mathfrak a$  and 
$\mathfrak a_1$ depends only on $\mathfrak a$.
If $\mathfrak a_1$ is not the zero ideal, then we can proceed in the same way. 
This process stops
if some ideal $\mathfrak a_r=0$, or with $\mathfrak a_n$. If $\mathfrak a_r=0$, then $x_{r+1},\dots,x_n$ can be chosen
arbitrarily, and the value of the other variables can be found by successively solving
equations.
 As $\mathfrak a_n$ does not depend on the variables,
it can only be the zero or unit ideal. 
In particular, if $\mathfrak a$ has no zeroes, then it is the
unit ideal \cite[p. 76]{HN}.

\begin{ex}To illustrate the difference in the elimination procedures we
consider Macaulay's example  iii in section 17 \cite[p. 23]{M16}. He considers the ideal 
$\mathfrak a=(x_1^3,x_2^3,x_1^2+x_2^2+x_1x_2x_3)$.
A less symmetric,
but more convenient basis of $\mathfrak a$ is 
\[
(x_1^2+x_2^2+x_1x_2x_3,x_1x_2^2(1-x_3^2),x_2^3)\;.
\]
The ideal $\mathfrak a$ has one
isolated component, $\mathfrak a'=(x_1^2+x_2^2+x_1x_2x_3,x_1x_2^2,x_2^3)$
and two embedded components $\mathfrak a''=(x_3-1,x_1^2+x_2^2+x_1x_2,x_2^3)$
and $\mathfrak a'''=(x_3+1,x_1^2+x_2^2-x_1x_2,x_2^3)$.
As the polynomial $f_1=x_1^2+x_2^2+x_1x_2x_3$ is regular in $x_1$, we may in
Kronecker's method take the resultant of $f_1$ and $v_2f_2+v_3f_3$, where $f_2$
and $f_3$ are the other two generators of $\mathfrak a$  \cite[\S 73, Remark 1]{MAII}.
We get the determinant
\[
\begin{vmatrix}
1 & x_2x_3 & x_2^2\\
v_2 x_2^2 (1-x_3^2) & v_3x_2^3&0\\
0& v_2 x_2^2 (1-x_3^2) & v_3x_2^3
\end{vmatrix}
\]
which equals
\[
x_2^6\left(v_2^2(1-x_3^2)^2-v_2v_3x_3(1-x_3^2)+v_3^2\right)\;.
\]
It follows that the complete resolvent is $x_2^6$.
 For $\mathfrak a'$ the computation is almost the
same, except that the factors $(1-x_3^2)$ are to be removed from the determinant.
Although $\mathfrak a\varsubsetneq \mathfrak a'$ both ideals have the same complete resolvent.

To eliminate according to Hentzelt-Noether we divide $f_2,x_1f_2,f_3,xf_3$ by 
$f_1=x_1^2+x_2^2+x_1x_2x_3$ and find that $f_2,x_1f_2,f_3$ form a basis
of the module of polynomials in $\mathfrak a$ of degree at most 1 in $x_1$.
The coefficient matrix is
\[
\begin{bmatrix}
x_2^3&0&0\\
0&x_2^3&  x_2^2 (1-x_3^2)
\end{bmatrix}
\]
with minors $x_2^6$ and $x_2^5(1-x_3^2)$. For $\mathfrak a'$ we find the ideal 
generated by $x_2^5$.
\end{ex}

Successive elimination requires that the variables are general. 
This is achieved by
Noether \cite{N23} 
by adjoining the coefficients $u_{ij}$ of an indeterminate $n\times n$ matrix $U$ to the
field $K$ and the change of variables $y_i=\sum u_{ij}x_j$. The same device is 
used by Hermann \cite{Hermann}.

\begin{remark}
Proposition \ref{WNSS} is here a consequence of a more general theory.
A direct proof of the weak Nullstellensatz
is shorter \cite[\S 74]{MAII}. We use induction on the number of variables.
For one variable the result is true by the extended Euclidean algorithm.
Suppose the ideal $\mathfrak a=(F_1,\dots,F_k)$ has no common zeros, and that 
$\mathfrak a$ contains a polynomial 
$F$  regular in $x_1$. We eliminate $x_1$ as above (with Kronecker's or 
Hentzelt's method) and find polynomials
$F_1^{(1)},\dots, F_{k_1}^{(1)}\in  (F_1,\dots,F_k)$, again without common zeros.
By the induction hypothesis $1\in (F_1^{(1)},\dots, F_{k_1}^{(1)})
\subset  (F_1,\dots,F_k)$.
\end{remark}

\section{Hilbert's proof}\label{Hilb-proof}
Hilbert's original proof is also based on elimination. The theorem is formulated
with the application to invariants in mind. It looks different from the theorem 
stated by Rabinowitsch.

\begin{thm}\label{Hilb-NSS}
Given $m$ homogeneous polynomials $f_1,\dots,f_m$ in
$n$ variables $x_1,\dots,x_n$, let $F, F', F'', \dots$ be homogeneous 
polynomials in the same
variables with the property that they vanish for all those values of these
variables for which the given  $m$ polynomials $f_1,\dots,f_m$ all
are equal to zero: then it is always possible to determine an integer $r$
such that any product $\prod^{(r)}$ of $r$ arbitrary polynomials
of the sequence $F, F', F'', \dots$ can be expressed in the form
\[
\textstyle{\prod^{(r)}}=a_1f_1+a_2f_2+\dots+a_mf_m\;,
\]
where $a_1,a_2,\dots,a_m$ are suitably chosen polynomials in the
variables $x_1,\dots,x_n$.
\end{thm}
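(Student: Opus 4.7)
The plan is to reduce to the case of a single form $F$ by the basis theorem, and then prove the single-form Nullstellensatz by induction on the number of variables $n$ via resultants.

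\textbf{Reduction to a single form.} By the basis theorem the ideal $\mathfrak{b}=(F,F',F'',\dots)$ generated by the sequence is finitely generated, say $\mathfrak{b}=(F_1,\dots,F_s)$, each $F_j$ still vanishing on the common zeros of $f_1,\dots,f_m$. Suppose I have proved that each $F_j$ admits an exponent $\rho_j$ with $F_j^{\rho_j}\in(f_1,\dots,f_m)$. Setting $\rho=\max_j\rho_j$ and $r=s(\rho-1)+1$, pigeonhole forces any product of $r$ of the $F_j$'s to contain some factor at least $\rho$ times, hence to lie in $(f_1,\dots,f_m)$. Since every member of the sequence $F,F',F'',\dots$ is a polynomial combination of $F_1,\dots,F_s$, expanding any $r$-fold product of sequence members into $k[x]$-linear combinations of $r$-fold products of the $F_j$'s gives the required representation $\prod^{(r)}=a_1f_1+\dots+a_mf_m$.

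\textbf{Single-form Nullstellensatz by induction on $n$.} For $n=1$ the ideal $(f_1,\dots,f_m)\cap k[x_1]$ is principal, say $(x_1^e)$, and any homogeneous $F$ vanishing at $x_1=0$ is a pure power of $x_1$, so the claim is immediate. For the step, perform a generic linear change of coordinates so that $f_m$ is regular of order $d$ in $x_n$, introduce indeterminates $v,u_1,\dots,u_{m-1}$, and form the resultant
\[
R=\operatorname{Res}_{x_n}\bigl(f_m,\;vF+u_1f_1+\dots+u_{m-1}f_{m-1}\bigr)\in k[v,u,x_1,\dots,x_{n-1}],
\]
which lies in the ideal $(f_1,\dots,f_m,vF)$ of $k[v,u,x_1,\dots,x_n]$. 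Collecting $R$ by monomials in $v$ and the $u_i$ gives finitely many forms $G_{a,\alpha}(x_1,\dots,x_{n-1})$, each lying in $(f_1,\dots,f_m,F)$ when regarded in $k[x_1,\dots,x_n]$, and the coefficient of the top monomial $v^d$ is the single-variable resultant $\operatorname{Res}_{x_n}(f_m,F)$, which is (up to the leading coefficient of $f_m$) the product $\prod F(x_1,\dots,x_{n-1},\xi_i)$ over the roots $\xi_i$ of $f_m$ in $x_n$. Because $F$ vanishes on the common zeros of $f_1,\dots,f_m$, every $G_{a,\alpha}$ vanishes on the image in $\A^{n-1}$ of the common zero set of $f_1,\dots,f_{m-1},f_m$; the inductive hypothesis applied in $n-1$ variables produces powers of the $G_{a,\alpha}$ inside the corresponding lower-dimensional ideal, and combining these relations with the top $v$-coefficient to eliminate $v$ and the $u_i$ extracts the desired power $F^\rho\in(f_1,\dots,f_m)$.

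\textbf{Main obstacle.} The delicate point is the bookkeeping across the recursion: each elimination round introduces auxiliary variables and accumulates exponents, and one must verify that the final expression is a genuine power of $F$ modulo $(f_1,\dots,f_m)$ rather than a spurious mixed polynomial, and that the generic coordinate change prevents loss of common zeros ``at infinity'' in $x_n$ at each recursive step. Hilbert's homogeneous framework is exactly what makes the degree accounting uniform across the induction and allows the extraction of a single integer $r$ that works simultaneously for all products of forms drawn from the sequence.
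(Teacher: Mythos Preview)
Your reduction to finitely many forms and then to a single form via the basis theorem is fine and matches the paper's own remark following the statement of the theorem.

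The inductive step, however, has a real gap. You form $R=\operatorname{Res}_{x_n}(f_m,\,vF+\sum u_if_i)$, extract the coefficients $G_{a,\alpha}\in k[x_1,\dots,x_{n-1}]$, observe that each lies in $(f_1,\dots,f_m,F)$ and vanishes on the projection of $V(f_1,\dots,f_m)$, and then write: ``the inductive hypothesis applied in $n-1$ variables produces powers of the $G_{a,\alpha}$ inside the corresponding lower-dimensional ideal, and combining these relations with the top $v$-coefficient to eliminate $v$ and the $u_i$ extracts the desired power $F^\rho\in(f_1,\dots,f_m)$.'' Two things are missing. First, you never name the ideal in $n-1$ variables to which the induction is applied; presumably it is the ideal generated by the $v$-free coefficients $G_{0,\alpha}$ (Kronecker's elimination step), but you should say so and check that its zero set is the projection. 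Second, and this is the serious point: even granting $G_{a,\alpha}^{\rho}\in(f_1,\dots,f_m)$ for all $a,\alpha$, there is no mechanism here that produces a bare power of $F$. Writing $G_{a,\alpha}\equiv h_{a,\alpha}F\pmod{(f_1,\dots,f_m)}$ gives only $h_{a,\alpha}^{\rho}F^{\rho}\in(f_1,\dots,f_m)$, and the top $v$-coefficient $\operatorname{Res}_{x_n}(f_m,F)$ is a norm of $F$ over the roots of $f_m$, not a power of $F$; it sits in $(f_m,F)$ and gives no extra leverage. Your own ``Main obstacle'' paragraph flags exactly this worry without resolving it.

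Hilbert's proof in the paper has a genuinely different architecture, and the difference is not cosmetic. It proceeds in two stages. Stage one treats the case where $V(f_1,\dots,f_m)$ is finite, by induction on the \emph{number of common zeros}: given an extra zero $\alpha$, one constructs via successive resultants a form $\Phi$ with $\Phi(\alpha)\neq 0$ and $\Phi\cdot\prod^{(r')}\in(f_1,\dots,f_m)$; since $(\Phi,f_1,\dots,f_m)$ has one fewer zero, the inductive hypothesis yields $\prod^{(r'')}\in(\Phi,f_1,\dots,f_m)$, and multiplying the two relations gives $\prod^{(r'+r'')}\in(f_1,\dots,f_m)$. Stage two handles the general case by induction on $n$, but via the substitution $x_1=tx_2$ and a parametric linear-algebra argument, crucially using that the bound $r$ from the inductive hypothesis depends only on the number and degrees of the forms, not on their coefficients; this produces binary forms $\psi_{ij}$ with $\psi_{ij}\prod^{(r_{ij})}\in(f_1,\dots,f_m)$, and since the $\psi_{ij}$ together have only finitely many common zeros one invokes stage one. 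The device that makes both stages work is finding an auxiliary $\Phi$ (or $\psi$) such that $\Phi\cdot(\text{product})$ lies in the ideal \emph{and} adjoining $\Phi$ strictly simplifies the zero set; your resultant scheme does not produce such a $\Phi$.
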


\begin{remark}
\noindent
\begin{enumerate}[label=\arabic*.,wide,nosep] 
\item Hilbert formulates the projective Nullstellensatz.
As  the polynomials are homogeneous, their zeros are taken in projective space.
The inhomogeneous version follows by making all polynomials in the sequence
homogeneous with a new variable  $x_0$, and applying the homogeneous theorem to
a set of generators of the homogenisation of the ideal $(f_1,\dots,f_m)$.
Putting then $x_0=1$ leads to the sought relation.
\item
The Theorem in the above form implies
in particular that for any polynomial among the $F, F', F'', \dots$
the $r$-th power lies in the ideal $(f_1,\dots,f_m)$. Hilbert remarks that this fact was 
stated and proved for inhomogeneous polynomials of two variables
by Netto \cite{Netto}. 
The special case that the $r$-th power lies in the ideal implies the general
case.
Firstly, by Hilbert's basis theorem (Theorem I of \cite{Hilb90}), the polynomials
in the sequence are expressable in finitely many of them, say
$F^{(1)}, \dots,F^{(k)}$. Any product of $r$ polynomials
$F, F', F'', \dots$ becomes a sum of products of $r$ polynomials
$F^{(1)}, \dots,F^{(k)}$ with polynomials coefficients.
If $(F^{(i)})^{r_i}\in (f_1,\dots,f_m)$, then put
$r=(r_1-1)+(r_2-1)+\dots+(r_k-1)+1$. Every product
$(F^{(1)})^{l_1} \cdots (F^{(k)})^{l_k}$ with $\sum l_i=r$ contains at least
one factor  $(F^{(i)})^{r_i}$; otherwise $\sum l_i\leq \sum(r_i-1) = r-1$.
\cite[\S 75]{MAII}, \cite[\S130]{vdW67}.
\item 
The statement
that $r$ is independent of $f$, but only depends on the $f_i$, is nowadays   normally
not included. It follows from the fact that the ideal of polynomials vanishing at the zero set
of the $f_i$ is finitely generated. 
\end{enumerate}
\end{remark}

For the proof of the theorem we first reduce to the case that there are only
finitely many $F^{(i)}$. By the basis theorem every polynomial in the sequence
is a linear combination  of say $F^{(1)}, \dots,F^{(k)}$, and a product of $r$
polynomials is a sum  of products of $r$ polynomials
$F^{(1)}, \dots,F^{(k)}$. Hilbert writes this reduction at the end of his proof,
and starts by assuming that there are only finitely many polynomials in the sequence.

The proof then splits in two parts. In the first it is assumed
that the polynomials
$f_1,\dots,f_m$ have only finitely many common zeros.
The main tool is elimination using the resultant of two binary forms (see Appendix \ref{appendix}).
Substituting in the $f_i$  the expressions
$x_1\xi_1,\dots,x_{n-1}\xi_1,\xi_2$ for the variables $x_1,\dots,x_n$
makes them binary forms
in the variables $\xi_1$, $\xi_2$, of degrees  $\nu_1,\dots,\nu_m$.
Let $\nu = \max_j\{\nu_j\}$.

To eliminate $\xi_1,\xi_2$ Hilbert uses Kronecker's device \cite{Kronecker} and forms the expressions
\begin{align*} 
F_1 &=  u_1f_1+\dots +u_mf_m \\ 
F_2 &=   v_1f_1+\dots +v_mf_m
\end{align*}
where the $u_i$ and $v_i$ are binary forms in  $\xi_1$, $\xi_2$ of degree $\nu-\nu_i$
with indetermined coefficients, making
$F_1$ and $F_2$ homogeneous of degree $\nu$.
The resultant $R(F_1,F_2)$ is a polynomial in the indeterminates occurring
in the $u_i$ and $v_i$, whose coefficients are polynomials
$f_1',\dots,f_{m'}'$ depending only on the variables $x_1,\dots,x_{n-1}$,
and by putting $\xi_1=1$, $\xi_2=x_n$ 
one sees (from Proposition \ref{polxres-in-id}) that the $f_i'$  lie in the ideal $(f_1,\dots,f_m)$.

Hilbert does not consider the possibility that all $f_i'$ are identically zero.
This happens if one of the common zeros is the point $(0:\dots:0:1)$,
for then $(\xi_1:\xi_2)=(0:1)$ is a common zero of $F_1$ and $F_2$.
The standard trick is to apply a general linear transformation.
We may therefore assume that no common zero of the transformed system
lies in a coordinate hyperplane, simplifying somewhat Hilbert's argument.

If the polynomials $f_i$ have the common zeros
$(\alpha_1:\ldots:\alpha_{n-1}:\alpha_n)$,
$(\beta_1,:\ldots:\beta_{n-1}:\beta_n)$, \dots,
$(\kappa_1:\ldots:\kappa_{n-1}:\kappa_n)$, then the $f_i'$ have 
only the common zeros
$(\alpha_1:\ldots:\alpha_{n-1})$, \dots,
$(\kappa_1:\ldots:\kappa_{n-1})$.

In the same way the variable $x_{n-1}$ can be eliminated from the
$f_i'$, leading to polynomials $f_1'',\dots,f_{m''}''$, and so on until
a system of binary forms $f_1^{(n-2)},\dots, f_{m^{(n-2)}}^{(n-2)}$
in the variables $x_1$, $x_2$ is reached.

Hilbert uses this procedure to prove the result by induction on the
number of common zeros.
The base of the induction is the case that the $f_i$ have no common zeros
at all. Then the  forms $f_1^{(n-2)},\dots, f_{m^{(n-2)}}^{(n-2)}$ have 
no common zeros. This implies that every binary form of sufficiently high degree
lies in the ideal $(f_1^{(n-2)},\dots, f_{m^{(n-2)}}^{(n-2)})$
and therefore in the ideal $(f_1,\dots,f_m)$; in particular $x_1^{r_1}$ and
$x_2^{r_2}$ lie in the ideal for some $r_1$ and $r_2$. In the same way it
follows that $x_3^{r_3}$, \dots, $x_n^{r_n}$ lie in the ideal for sufficiently
large $r_3$, \dots, $r_n$. Therefore every homogeneous polynomial
in $x_1, \dots, x_n$ of degree at least $\sum(r_i-1)+1$ lies in the ideal, proving
the base case. 

This result can be called the weak projective Nullstellensatz, and we formulate
it separately.
\begin{prop}\label{WPNSS}
 A  homogeneous ideal $\mathfrak a$ has no zeros  if and only if there exists an 
integer $r$ such that 
every form of degree at least $r$ lies in $\mathfrak a$.
\end{prop}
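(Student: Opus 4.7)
The plan is to prove the two directions separately. For the easy direction, suppose every form of degree at least $r$ lies in $\mathfrak{a}$. In particular $x_i^r\in\mathfrak{a}$ for each $i=1,\dots,n$. At any hypothetical projective zero $(\alpha_1:\dots:\alpha_n)$ of $\mathfrak{a}$ one would then have $\alpha_i^r=0$, and hence $\alpha_i=0$, for every $i$; but not all projective coordinates can vanish simultaneously, a contradiction. So $\mathfrak{a}$ has no zeros.

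For the substantive direction, assume $\mathfrak{a}$ has no zeros. I would first reduce the conclusion to showing that, for each $i$, there exists an integer $r_i$ with $x_i^{r_i}\in\mathfrak{a}$. Granted this, set $R=(r_1-1)+\dots+(r_n-1)+1$; any monomial $x_1^{l_1}\cdots x_n^{l_n}$ of total degree at least $R$ must satisfy $l_i\geq r_i$ for some $i$ (otherwise $\sum l_j\leq R-1$), hence contains $x_i^{r_i}$ as a factor and lies in $\mathfrak{a}$. So every form of degree at least $R$ belongs to $\mathfrak{a}$, by the same pigeonhole counting already employed in the second remark following Theorem \ref{Hilb-NSS}.

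To produce $x_i^{r_i}\in\mathfrak{a}$ I would dehomogenize. Fix an index $i$ and let $\mathfrak{a}_{(i)}$ be the ideal in $K[x_1,\dots,\widehat{x_i},\dots,x_n]$ generated by all specializations $f(x_1,\dots,1,\dots,x_n)$ with $f\in\mathfrak{a}$. Any affine common zero of $\mathfrak{a}_{(i)}$ would, after inserting a $1$ in the $i$-th coordinate, supply a projective zero of $\mathfrak{a}$ lying in the chart $\{x_i\neq 0\}$, contradicting the hypothesis. Hence $\mathfrak{a}_{(i)}$ has no affine zeros, and the affine weak Nullstellensatz (Proposition \ref{WNSS}) forces $1\in\mathfrak{a}_{(i)}$. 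Rehomogenizing the resulting identity, by multiplying through by a sufficiently large power of $x_i$, yields the desired relation $x_i^{r_i}\in\mathfrak{a}$.

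The only point requiring care is this final rehomogenization step: one must verify that the polynomial coefficients appearing on the right-hand side, after clearing the $x_i$-denominators, are genuine elements of the homogeneous ideal $\mathfrak{a}$. Since the correspondence between a homogeneous ideal and its dehomogenization in the affine chart $x_i=1$ is exactly the bijection indicated in the first remark following Theorem \ref{Hilb-NSS} (with $x_i$ playing the role of $x_0$ there), this is a bookkeeping check rather than a real obstacle.
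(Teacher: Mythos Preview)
Your proof is correct, but it takes a genuinely different route from the paper's. In the paper, Proposition~\ref{WPNSS} is extracted as the base case of Hilbert's induction: starting from homogeneous $f_1,\dots,f_m$ with no projective zeros, one substitutes $x_1\xi_1,\dots,x_{n-1}\xi_1,\xi_2$ for the variables, and successively eliminates via resultants of binary forms (Kronecker's device, staying homogeneous throughout) until one reaches binary forms in $x_1,x_2$ lying in $(f_1,\dots,f_m)$ with no common zeros; this gives $x_1^{r_1},x_2^{r_2}\in\mathfrak a$, and by permuting the variables also the remaining $x_i^{r_i}$. You instead dehomogenize in each chart $x_i=1$ and invoke the affine weak Nullstellensatz (Proposition~\ref{WNSS}), which was established separately via Kronecker's inhomogeneous elimination, then rehomogenize. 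Your route is more modular---it reuses an already-proved black box---whereas Hilbert's argument is self-contained in the homogeneous world and is really one step in a larger inductive proof of the full Nullstellensatz. Both ultimately rest on resultants, just packaged differently; the pigeonhole step $\sum(r_i-1)+1$ is common to both.
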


We continue with the proof of the theorem.
The induction step is that the statement holds if the polynomials
have a given number of common zeros, say  $(\beta_1:\ldots:\beta_{n-1}:\beta_n)$, \dots,
$(\kappa_1:\ldots:\kappa_{n-1}:\kappa_n)$. Suppose that there is an additional common zero $(\alpha_1:\ldots:\alpha_{n-1}:\alpha_n)$. Then every $F^{(i)}$ can be written
in the form{\small
\[
(\alpha_2x_1-\alpha_1x_2)F^{(i)}_{12}+
(\alpha_3x_1-\alpha_1x_3)F^{(i)}_{13} + \dots +
(\alpha_nx_{n-1}-\alpha_{n-1}x_n)F^{(i)}_{n-1,n}.
\]}

\noindent
By our assumption  $\alpha_1$ and $\alpha_2$ are both non-zero, so elimination as above leads to forms
 $f_1^{(n-2)},\dots, f_{m^{(n-2)}}^{(n-2)}$
in the variables $x_1$, $x_2$ lying in the ideal $(f_1,\dots,f_m)$,
which only have the common zeros
$(\alpha_1:\alpha_2)$, \dots, $(\kappa_1:\kappa_2)$. Choose one of these forms and write
it in the form $(\alpha_2x_1-\alpha_1x_2)^{r_{12}}\varphi_{12}$ with $\varphi_{12}$
a binary form not vanishing for $x_1=\alpha_1$, $x_2=\alpha_2$. 
In the same way one finds
$r_{ij}$ and $\varphi_{ij}$ for the other $1\leq i <j \leq n$.

Put $r'=r_{12}+r_{13}+\dots+r_{n-1,n}$  and $\Phi=
\varphi_{12}\,\varphi_{13}\cdots\varphi_{n-1,n}$. Then 
\[
\Phi\,\textstyle{\prod^{(r')}}\in  (f_1,\dots,f_m)\;,
\]
where $\Phi$ is a polynomial that does not vanish in the point 
$(\alpha_1:\ldots:\alpha_n)$ and $\prod^{(r')}$ is an $r'$-fold product of $F^{(i)}$.
The polynomials $\Phi,f_1,\dots,f_m$ have only the common zeros $(\beta_1:\ldots:\beta_n)$, \dots,
$(\kappa_1:\ldots:\kappa_n)$. Therefore there exists a number $r''$
such that  $\prod^{(r'')}\in (\Phi,f_1,\dots,f_m)$.
Then  $\prod^{(r)}\in (f_1,\dots,f_m)$ for  $r=r'+r''$, which proves the induction step.
 
In the second step the theorem is proved  in general by induction on the number
 of variables.
 The induction hypothesis is that the result holds for $n-1$ variables and that the
 number $r$  can be chosen below a bound only depending on
 the degrees and the (finite) number of the forms $f_1,\dots,f_m,F,F',\dots$, but not on their
 coefficients.
 The base of the induction is $n=2$, where the result is true by the first part of the proof, as binary forms can only have 
 a finite number of zeros.

For the induction step put $x_1=tx_2$. The polynomials 
$f_1,\dots,f_m,$ $F,F',\dots$ become polynomials $g_1,\dots,g_m,G,G',\dots$
in the $n-1$ variables $x_2,\dots,x_n$ with coefficients polynomials in the 
parameter $t$. If $t$ takes a specific value then every $G^{(i)}$ vanishes whenever
all $g_i$ vanish (as polynomials in  $x_2,\dots,x_n$). By the induction hypothesis
there is a number $r_{12}$ such that every product $\prod^{(r_{12})}$ 
of polynomials  $G^{(i)}$ for every special value of $t$ has a representation
\[
\prod^{(r_{12})}=b_1g_1+\dots +b_mg_m
\]
with the $b_i$ polynomials in $x_2,\dots,x_n$. 
Considering  the coefficients of the $b_i$ as indeterminates $u_j$ and taking in this
equation coefficients of the monomials in  $x_2,\dots,x_n$ yields a system
of linear inhomogeneous equations for the $u_j$. The coefficients of these
linear equations are polynomials in $t$, and for every value of $t$ solutions exist.

At this point Hilbert uses an ``easily proved'' lemma:
If  a given a system of linear equations
\begin{align*} 
c_{11} u_1+\dots +c_{1p}u_p &= c_1, \\ 
&\vdotswithin{=}\\
c_{q1} u_1+\dots +c_{qp}u_p &= c_q
\end{align*}
with $c_{ij}$, $c_k\in K[t]$, has solutions for every value of $t$ ($K$ being
an infinite field)
then there exists a solution with $u_i\in K(t)$.

Indeed, as Popov remarks in his comments \cite{popov}, a solution exists if and only if
 the rank of
the coefficient matrix is equal to the rank of the augmented matrix.
As $k$ is infinite, one can find a $t_0\in k$ such the rank of these matrices 
over $k(t)$ is the same as the rank over $k$ with $t_0$ substituted for $t$.
Applying this lemma to the equations for the coefficients of the $b_i$
and substituting $t=\frac{x_1}{x_2}$ gives after clearing
denominators that 
\[
\psi_{12}\prod^{(r_{12})} \in (f_1,\dots,f_m)
\]
with $\psi_{12}$ a binary form in $(x_1,x_2)$, and 
$\prod^{(r_{12})}$ the product of $r_{12}$ polynomials $F^{(i)}$
corresponding to the chosen $G^{(i)}$.
In the same way one finds $r_{ij}$ and a binary form 
 $\psi_{ij}$ in $(x_i,x_j)$ with 
 $\psi_{ij}\prod^{(r_{ij})} \in (f_1,\dots,f_m)$.
Now put $r'=\max\{r_{ij}\}$ and choose $r'$ polynomials $F^{(i)}$.
The corresponding binary forms $\psi_{12},\dots,\psi_{n-1,n}$
have only finitely many common zeros, so also the
polynomials $\psi_{12}\dots\psi_{n-1,n},f_1,\dots,f_m$.
By the first part of the proof there exists a number $r''$ such that 
\[
\prod^{(r'')} \in (\psi_{12}\dots\psi_{n-1,n},f_1,\dots,f_m)\;.
\]
As $\psi_{ij}\prod^{(r_{ij})} \in (f_1,\dots,f_m)$ one has that for
$r=r'+r''$ that  
\[
\prod^{(r)} \in (f_1,\dots,f_m)\;.
\]
This concludes the induction step, and with that the proof of the Nullstellensatz.

\section{Proofs using primary decomposition}
Primary ideals were introduced by Lasker \cite{L}
in the setting of  polynomial rings. He used primary decomposition
to give a new proof of the Nullstellensatz. 
Macaulay \cite{M16} follows this strategy, but with different proofs.

The modern definition of primary ideals is due to Emmy Noether \cite{N21},
and applies to all Noetherian rings. 

\begin{defn} 
An ideal $\mathfrak q$ in a Noetherian ring $R$ is primary if  whenever $ab\in \mathfrak q$ but 
$a\notin \mathfrak q$ it follows that  $b^k\in \mathfrak q$ for some $k>0$. 
\end{defn}

The radical $\sqrt \mathfrak q$ of $\mathfrak q$, that is $\{a\in R\mid a^k \in \mathfrak q \text{ for some }k>0\}$,  is 
a prime ideal $\mathfrak p$ and $\mathfrak q$ is said to be $\mathfrak p$-primary.

By the Lasker--Noether Theorem (which Lasker proposed to call  the Noether--Dedekind Theorem)
every ideal $\mathfrak a$ 
has an irredundant primary decomposition into primary ideals
$   \mathfrak a=\mathfrak q_{1}\cap \cdots \cap \mathfrak q_{n}$ 
(for a proof see  van der Waerden's Algebra II 
\cite[Ch. 15]{vdW67}). The ideals $\mathfrak p_i=\sqrt {\mathfrak q_i}$ 
are the associated primes of $\mathfrak a$.

The decomposition is not unique. The simplest example is the ideal
$(x^2,xy)\subset K[x,y]$, which can be written as $(x)\cap (x^2,xy,y^2)$
but also as  $(x)\cap (x^2,y)$, and even $(x)\cap (x^2,y+\lambda x)$.
The number of components is always the same, and also the associated
prime ideals; in the example the ideals $(x)$ and $(x,y)$. According to 
\cite[footnote 10]{N21} Noether learned this example from K. Hentzelt.
This means that Emmy Noether occupied herself with primary ideals
already in 1913/14.
Macaulay \cite{M13,M16} has more complicated examples. Maybe Noether
knew about the manuscript for \cite{M13} through her father; the paper has
footnotes on p. 71 and p. 86, mentioning Max Noether, saying among others:
``I am also indebted to Professor Noether for kindly suggesting
 other alterations which I have carried out''. This sounds 
as a reaction to a (non-anonymous) referee report.

By  the definition of a primary ideal and the fact that the
associated prime ideal is finitely generated  one immediately obtains
\begin{prop}\label{M32}
If $\mathfrak q$ is a primary ideal and $\mathfrak p$ the associated prime ideal, then some
finite power of $\mathfrak p$ is contained in $\mathfrak q$.
\end{prop}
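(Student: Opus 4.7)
The plan is to use the finite generation of $\mathfrak p$ together with the definition of the radical, combined with a pigeonhole argument on exponents that is essentially the same trick used earlier in the Remark after Theorem \ref{Hilb-NSS}.

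Since $R$ is Noetherian, the prime $\mathfrak p = \sqrt{\mathfrak q}$ is finitely generated, say $\mathfrak p = (a_1,\dots,a_s)$. By the very definition of the radical, for each $i$ there exists an integer $k_i \geq 1$ with $a_i^{k_i} \in \mathfrak q$. I would then set
\[
N = (k_1 - 1) + (k_2 - 1) + \dots + (k_s - 1) + 1
\]
and claim that $\mathfrak p^N \subseteq \mathfrak q$.

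To verify this, note that $\mathfrak p^N$ is generated as an ideal by monomials $a_1^{l_1} a_2^{l_2} \cdots a_s^{l_s}$ with $\sum l_i = N$. For any such monomial, at least one exponent $l_j$ must satisfy $l_j \geq k_j$, for otherwise $l_j \leq k_j - 1$ for every $j$ would give $\sum l_i \leq \sum (k_j - 1) = N - 1$, contradicting $\sum l_i = N$. Hence that monomial contains the factor $a_j^{k_j} \in \mathfrak q$, so the whole monomial lies in $\mathfrak q$. Since $\mathfrak p^N$ is generated by such monomials, $\mathfrak p^N \subseteq \mathfrak q$.

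There is no real obstacle here: the one subtle point is the combinatorial lemma that any monomial of total degree $N$ in $s$ variables must have some exponent as large as $k_j$, and this is the standard pigeonhole used in the basis-theorem reduction cited above. The argument works in any Noetherian ring and does not require $R$ to be a polynomial ring, matching the generality of the modern definition given just above the proposition.
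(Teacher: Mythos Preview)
Your proof is correct and is exactly the argument the paper has in mind: right before the proposition the paper says that it follows ``by the definition of a primary ideal and the fact that the associated prime ideal is finitely generated,'' and you have simply written out this one-line justification in full, including the same pigeonhole on exponents used in the Remark after Theorem~\ref{Hilb-NSS}. Note, however, that the paper's real interest in Proposition~\ref{M32} is historical: it goes on to present Macaulay's proof via the $u$-resolvent and Lasker's proof via ideal quotients and the Hilbert polynomial, which are genuinely non-trivial because they start from the older Lasker--Macaulay definition of primary (phrased in terms of zero sets) rather than Noether's radical-based definition that you used.
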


Noether \cite{N21} applies her  results to  polynomial ideals; in this paper she
still only
considers  complex coefficients. The  connection with  elimination theory
and ideal theory, say as described by Macaulay in his Tract \cite{M16},
 is given by 
the following special case of the Nullstellensatz.
\begin{prop}\label{prime-zero}
A prime ideal $\mathfrak p$ consists of all polynomials vanishing on  its  zero set. 
\end{prop}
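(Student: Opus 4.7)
The inclusion $\mathfrak p \subseteq I(V(\mathfrak p))$ is immediate from the definition of the zero set. For the converse, given $f\notin \mathfrak p$ my plan is to exhibit a point $a\in V(\mathfrak p)$ with $f(a)\neq 0$, by combining Noether normalisation with the weak Nullstellensatz (Proposition \ref{WNSS}).

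Set $R = K[x_1,\dots,x_n]/\mathfrak p$; since $\mathfrak p$ is prime, $R$ is an integral domain. After a general linear change of coordinates, Noether normalisation furnishes a polynomial subring $B = K[y_1,\dots,y_d]\subset R$ over which $R$ is a finite integral extension. The nonzero image $\bar f\in R$ satisfies a minimal monic relation
\[
\bar f^{\,r} + c_{r-1}\bar f^{\,r-1} + \dots + c_1\bar f + c_0 = 0, \qquad c_i\in B,
\]
and $c_0\neq 0$: otherwise one could cancel $\bar f$, using that $R$ is a domain, and contradict minimality. Since $K$ is infinite, I then choose $(b_1,\dots,b_d)\in K^d$ with $c_0(b)\neq 0$.

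It remains to lift $b$ to a point $a = (a_1,\dots,a_n) \in V(\mathfrak p)$ with $y_i(a) = b_i$ for each $i$. Concretely, I must show that the ideal $\mathfrak p + (y_1-b_1,\dots,y_d-b_d)$ is proper in $K[x_1,\dots,x_n]$, so that Proposition \ref{WNSS} produces the desired common zero. Granted such an $a$, substituting into the integral relation gives
\[
f(a)^r + c_{r-1}(b)f(a)^{r-1} + \dots + c_0(b) = 0,
\]
so that $f(a)=0$ would force $c_0(b)=0$, the desired contradiction.

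The crux is therefore the lifting step, i.e.\ the lying-over property for the integral extension $B\subset R$: one must show $(y_1-b_1,\dots,y_d-b_d)R\neq R$. This follows from Nakayama's lemma applied to the finitely generated $B$-module $R$, or equivalently from the standard observation that any maximal ideal $\mathfrak n$ of $R$ containing $\mathfrak mR$ makes $R/\mathfrak n$ a field integral over $K$, hence equal to $K$ since $K$ is algebraically closed; the composition $K[x_1,\dots,x_n]\twoheadrightarrow R\twoheadrightarrow K$ then provides the sought point of $V(\mathfrak p)$.
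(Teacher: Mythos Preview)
Your proof is correct and takes a genuinely different route from both proofs the paper presents. Macaulay's proof (the one displayed as the proof of the Proposition) stays entirely within classical elimination theory: from the $u$-resolvent of $\mathfrak p$ he extracts explicit polynomials $\phi,\psi_1,\dots,\psi_{r-1}\in\mathfrak p$ that encode a birational parametrisation of the zero set, and then shows directly that any $F$ vanishing on $V(\mathfrak p)$ satisfies ${\phi'}^{\,l}F\in(\psi_1,\dots,\psi_{r-1},\phi)\subset\mathfrak p$, whence $F\in\mathfrak p$ since $\phi'\notin\mathfrak p$. Lasker's proof, sketched immediately afterwards, argues by induction on dimension using the behaviour of the Hilbert function under adding a non-zero-divisor. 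Your argument instead combines Noether normalisation with lying-over for integral extensions and reduces to Proposition~\ref{WNSS}. This is cleaner and more structural---essentially the modern textbook proof---whereas the paper's proofs are the historically authentic ones and are constructive in a way yours is not: Macaulay's yields explicit witnesses in $\mathfrak p$, and neither he nor Lasker needs the machinery of finite ring extensions or Nakayama, which only crystallised later. One small remark: your second alternative for the lifting step (passing directly to $R/\mathfrak n\cong K$) already produces the desired point without invoking Proposition~\ref{WNSS}, so the appeal to the weak Nullstellensatz is a convenience rather than an essential ingredient of your argument.
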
 

Conversely, the Nullstellensatz follows from this Proposition. 
Let $\mathfrak q_i$ be a primary ideal in the decomposition of $\mathfrak a$ with associated
prime ideal $\mathfrak p_i$. If $\mathfrak b$ is an ideal vanishing on the zero set of
 $\mathfrak a$, then it
vanishes on the zero set of $\mathfrak p_i$ and therefore  
 $\mathfrak b\subset \mathfrak p_i$  and $\mathfrak b^{k_i}\subset \mathfrak p_i^{k_i} 
\subset \mathfrak q_i$. Let $k$ be the maximum of the $k_i$, 
then $\mathfrak b^k\subset \bigcap \mathfrak q_i=\mathfrak a$.

Lasker stated Proposition \ref{prime-zero} in \cite{L}, adding that  it follows, say,  from the 
Nullstellensatz, but in the addendum \cite{L-err} he explained how to prove it
directly. It seems that Macaulay \cite{M16} did not notice this, as he criticises Lasker's 
proof of Proposition \ref{M32}, saying that Lasker 
first assumes the result and then proves it.

Macaulay and Lasker have a different definition of primary ideals, which makes 
the proof of Proposition \ref{M32} non-trivial.
Macaulay \cite{M16} defines 
a primary ideal  by the property that no product of two ideals
is contained in it without one of them contained in it or both containing 
its zero set. Hence if one does not contain the zero set the other is contained
in the ideal. 
By the Nullstellensatz Macaulay's definition is equivalent to Noether's.

Lasker's original definition was stated for  homogeneous ideals
in $S=K[x_1,\dots,x_n]$, making 
the statements about primary decomposition more complicated.
A primary ideal $\mathfrak q$ and the associated prime ideal $\mathfrak p$ 
occur both in his definition:  whenever $ab\in \mathfrak q$
and $a\notin \mathfrak p$ it follows that $b\in \mathfrak q$.
To make the zero set $C$ of $\mathfrak p$ an irreducible component of the zero set 
of $\mathfrak q$ it is required that the dimension of $\mathfrak q$ is at most that  of $\mathfrak p$.
According to Lasker
an algebraic set $C$ has dimension $m$ if it has a finite number of points
in common with $m$ general linear forms, and if
the forms in an ideal $\mathfrak a$ vanish on sets of dimension $m$, but not on
sets of higher dimension, then $\mathfrak a$ has dimension $m$. Actually
Lasker uses the quantity $m+1$, which he calls ``Mannigfaltigkeit'' and 
Macaulay \cite{M13} translates with manifoldness or dimensionality.
The value 
0 is allowed for dimensionality, and it occurs in \cite{L} in an essential way, 
although it is not defined what it means.

Lasker's approach to primary decomposition is as follows.
Let $C_1$, \dots, $C_j$ be the irreducible components of the zero set of $\mathfrak a$
of highest dimension and let $\mathfrak p_i$ be the prime ideal corresponding to $C_i$. 
Define $\mathfrak a_i$ as the set of all $f\in S$ such that $C_i$ is not a component of the zero set of the ideal quotient $\mathfrak a:(f)=\{g\in S\mid gf\in \mathfrak a\}$, so there exists a $\phi$
not vanishing on $C_i$ with $f\phi\in \mathfrak a$. Then $\mathfrak a_i$ is an ideal, whose zero set
only consists of $C_i$.  Furthermore $\mathfrak a_i$ is primary, for  if $ab\in \mathfrak a_i$  with  $a\notin \mathfrak p_i$,
then $ab\phi \in \mathfrak a$ for a $\phi$ not vanishing on $C_i$, and as $a\phi\notin \mathfrak p_i$, we have 
$b\in \mathfrak a_i$ by the definition of $\mathfrak a_i$.

The set $C_i$ is not a component of the zero set of the ideal quotient  
$\mathfrak a_j'=\mathfrak a:\mathfrak a_j$.
Let $\psi \in \mathfrak a_1'+\dots+\mathfrak a_j'$ be a form  which does not vanish
 on  any of the $C_i$.
Then we claim that $\mathfrak a=\mathfrak a_1\cap\dots\cap \mathfrak a_j\cap (\mathfrak a,\psi)$. 
If $f$ is an element of the
right hand side, then $f\in (\mathfrak a,\psi)$ so $f-g\psi \in \mathfrak a\subset \mathfrak a_i$, and as $f\in\mathfrak a_i$,
we get $g\psi\in \mathfrak a_i$; because $\mathfrak a_i$ is primary, and $\psi$ does not vanish
on $C_i$, we get $g\in \mathfrak a_i$  for all $i$. As $\psi \in \mathfrak a_1'+\dots+\mathfrak a_j'$ we find that 
$g\psi\in \mathfrak a$ and therefore $f\in \mathfrak a$. The dimension of $(\mathfrak a,\psi)$ is lower, and we can repeat the process.

In this proof essential use is made of Proposition \ref{prime-zero}.
Lasker proves it in \cite{L-err},
Macaulay in \cite[Section 31]{M16} and formulates it as follows:
there is only one
prime ideal with a given (irreducible) zero set, viz. the ideal consisting
of all polynomials vanishing on the zero set. 
In \cite[Section 32]{M16} he shows Proposition \ref{M32}. With primary decomposition
the Hilbert-Netto Theorem (Macaulay's name for the
Nullstellensatz) then follows.

Macaulay proves Propositions \ref{prime-zero} and \ref{M32} with classical methods of 
elimination theory, which 
are unfamiliar to the modern reader. The main ingredient is the so-called
$u$-resolvent. The goal is to describe the irreducible
components of the zero set of an ideal. Consider $\mathfrak a=(F_1,\dots,F_k)$, which
is as always supposed to be prepared by a general linear coordinate change.
Macaulay shortly writes  ``The solutions of $F_1=F_2=\dots=F_k=0$ are obtained
in the most useful way by introducing a general unknown $x$ standing for
$u_1x_1+u_2x_2+\dots+u_nx_n$, where $u_1,u_2,\dots,u_n$ are undetermined
coefficients''. This is known as the Liouville substitution \cite{Liou}, and its use is explained
in detail in Netto's book \cite{Netto-Alg}. One substitutes 
\[
x_1=\frac{x-u_2x_2-\dots-u_nx_n}{u_1}
\]
in the equations and multiplies with suitable powers of $u_1$ to make the
new equations $f_1=f_2=\dots=f_k=0$ in $x,x_2,\dots,x_n$ polynomial. The 
solutions of the first system determine those of the second and vice versa.
The complete resolvent $D_uD_u^{(1)}\cdots D_u^{(n-1)} (=F_u)$ of
$(f_1,\dots,f_k)$ obtained by eliminating $x_2,x_3,\dots,x_n$ 
in this order is called the complete $u$-resolvent of $(F_1,\dots,F_k)$.

The $u$-resolvent $F_u$ is a polynomial in $x,x_2,\dots,x_n,u_1,\dots,u_n$. As a
polynomial in $x$, when the $x_i$ and $u_i$ have specific values,
it splits in linear factors. Such a linear factor  of $D_u^{(r-1)}$ has the form
\[x-u_1\xi_1-u_r\xi_r-u_{r+1}x_{r+1}-\dots-u_nx_n\]
where  $\xi_1,\dots,\xi_r,x_{r+1},\dots,x_n$ is  a solution of $F_1=F_2=\dots=F_k=0$.
Macaulay calls a linear factor a true linear factor if $\xi_1,\dots,\xi_r$  are 
independent of $u_1,\dots,u_n$, that is, if it is linear in $x, u_1,\dots,u_n$.

\begin{ex}
Consider the ideal $(x_1^2+x_2^2-2,x_1^2-x_2^2)$.
Substituting $x=u_1x_1+u_2x_2$ and eliminating $x_2$ using the
Sylvester determinant  gives
$F_u = 4u_1^2 (x-u_2-u_1)(x-u_2+u_1)(x+u_2-u_1)(x+u_2+u_1)$.
From the factor $x-u_2-u_1$ one finds the solution $(x_1,x_2)=(1,1)$ by
substituting the values $(1,0)$ and $(0,1)$ for $(u_1,u_2)$.
Using the same substitution for the principal ideal $(x_1^2+x_2^2-2)$
gives $F_u= (x-u_1\sqrt{2-x_2^2}-u_2x_2) (x+u_1\sqrt{2-x_2^2}-u_2x_2)$, which is
indeed of the form $(x-u_1\xi_1-u_2x_2) (x-u_1\xi_2-u_2x_2)$. 
\end{ex}

Macaulay goes on to prove that the solution supplied by a factor which is not 
a true linear one is an embedded solution. In particular 
all the linear factors of  the first complete partial $u$-resolvent
are true linear factors.  According to Macaulay
Kronecker states without proof that all linear factors of $F_u$ are true
linear factors, while  König's proof contains an error, and Macaulay doubts
whether the statement is true.  In fact, Kronecker's claim is true for the
resultant form in the elimination theory of  Hentzelt-Noether \cite[Satz XIII]{HN}.

An irreducible factor $R_u$ of rank $r$ of $F_u$ having a  true linear factor
leads to a parametrisation of the corresponding irreducible component of the
solution set. One can take $x_{r+1},\dots,x_n$ arbitrary and for each set of
values there are solutions $x_{1i},\dots,x_{ri}$, $i=1,\dots,d$, with $d$ the degree
of the component. Therefore we can (formally) write
\[
R_u=A\,\prod_{i=1}^d (x-u_1x_{1i}-\dots-u_rx_{ri}-u_{r+1}x_{r+1}-\dots-u_nx_n)
\]
so 
\[
(R_u)_{x=u_1x_1+\dots+u_nx_n}=
A\,\prod_{i=1}^d (u_1(x_1-x_{1i})+\dots+u_r(x_r-x_{ri}))
\]
The last expression is independent of $u_{r+1},\dots,u_n$ and vanishes identically
at all points of the solutions set and at no other points, that is irrespective of
$u_1,\dots,u_r$. The coefficients of the monomials in the $u_i$ in 
$(R_u)_{x=u_1x_1+\dots+u_nx_n}$ are polynomials in the $x_i$ which all vanish
at all points of the solution set and do not all vanish at other points. This gives equations
for the solution set. We single out some of them.
The coefficient of $u_r^d$ is $\phi(x_r,x_{r+1},\dots,x_n)=A\,\prod (x_r-x_{ri})$.
The coefficient of $u_1u_r^{d-1}$ is $\phi\sum\frac{x_1-x_{1i}}{x_r-x_{ri}}$, which we 
write as $x_1\phi'-\phi_1$, where $\phi'$ is the derivative of $\phi$ w.r.t. $x_r$
and $\phi_1=\phi\sum\frac{x_{1i}}{x_r-x_{ri}}$. Similarly we have $x_2\phi'-\phi_2$,
\dots, $x_{r-1}\phi'-\phi x_{r-1}$. Outside $\phi'=0$ we have therefore the 
equations
\[ 
\phi=0,\quad x_i=\frac{\phi_i}{\phi'}, \; i=1,\dots,r-1\;.
\]

With these preparations we can give the proof of Proposition \ref{prime-zero}.
\begin{proof}[Macaulay's proof  of Proposition \ref{prime-zero}]
The zero set is irreducible, otherwise the complete $u$-resolvent
would contain at least two factors corresponding to different
irreducible components, contradicting that the ideal is prime.

Let $\mathfrak p=(F_1,\dots,F_k)$ be a prime ideal.
It will be sufficient to prove that  $F\in \mathfrak p$ for every polynomial $F$ 
that vanishes on the zero set of $\mathfrak p$.
The first complete partial $u$-resolvent of $\mathfrak p$ will be a power $R_u^m$ of an 
irreducible polynomial $R_u$ in $x,x_{r+1},\dots,x_n$.
The complete $u$-resolvent lies in the prime ideal $(f_1,\dots,f_k)$, and for dimension reasons the other factors do not vanish on the zero set  of $(f_1,\dots,f_k)$. Hence 
$R_u^m$ and therefore $R_u$ itself belongs to $(f_1,\dots,f_k)$.
This gives that $(R_u)_{x=u_1x_1+\dots+u_nx_n}\in (F_1,\dots,F_k)=P$, and the 
same holds for the polynomial coefficients of the monomials in the $u_i$.
In particular $\phi\in \mathfrak p$ and $\psi_i:=x_i\phi'-\phi_i\in \mathfrak p$, $i_1,\dots,r-1$.

Let now $F$ vanish on the zero set of $\mathfrak p$ and substitute 
$x_i=\phi_i/\phi'$, $i=1,\dots,r-1$; then $F$ becomes a rational function of 
$x_r,x_{r+1},\dots,x_n$  with denominator ${\phi'}^l$,  where $l$ is the degree of $F$.
This rational function vanishes for all points of the zero set of $\mathfrak p$ where $\phi'$ 
does not vanish and its numerator is therefore divisible by $\phi$. We
conclude that 
\[
\textstyle
{\phi'}^l F(\frac{\phi_1}{\phi'},\dots,\frac{\phi_{r-1}}{\phi'},x_r,\dots,x_n)
= G\phi
\]
for some polynomial $G$ in $x_r,\dots,x_n$. 
Therefore ${\phi'}^lF(x_1,\dots,
x_n)\in
(\psi_1, \dots, \psi_{r-1},\phi)\subset \mathfrak p$ and hence $F\in \mathfrak p$.
\end{proof}

Macaulay's proof of Proposition \ref{M32}
follows the same steps, but now we can only conclude that
$R_u^m\in(f_1,\dots,f_k)$. Taking suitable coefficients of 
$(R_u^m)_{x=u_1x_1+\dots+u_nx_n} $ we find that 
$\phi^m\in \mathfrak q=(F_1,\dots,F_k)$ and $\psi_i^m-G\phi \in  \mathfrak q$
so $\psi_i^{m^2}\in  Q$. If $F\in P$ with $P$ the associated prime ideal
then  we have just seen that $\phi'^lF\in (\psi_1,\dots,\psi_{r-1},\phi)$.
Therefore $(\phi'^lF)^{rm^2} \in (\psi_1^{m^2},\dots,  \psi_{r-1}^{m^2},\phi^{m^2})
\subset \mathfrak q$ and since $\mathfrak q$ is primary and no power of $\phi'$ 
is contained in $\mathfrak q$
we conclude that $F^{rm^2}\in \mathfrak q$.

Lasker gives in \cite{L-err} a totally different proof of Proposition \ref{prime-zero}, based on properties of the Hilbert polynomial 
\cite{Hilb90}.
Let   $\mathfrak a$ be a homogeneous ideal in $S=K[x_1,\dots,x_n]$.
Define the Hilbert function $H_\mathfrak a(\nu)=\dim (S/\mathfrak a)_\nu$, where $(S/\mathfrak a)_\nu$ is the degree
$\nu$ part of $S/\mathfrak a$. Lasker proves from scratch the now
familar properties of this function \cite[Kap. II]{L}.  
For large $\nu$ the function $H_\mathfrak a(\nu)$ is a polynomial in $\nu$.
Lasker shows that $H_\mathfrak a(\nu)=0$ for sufficiently large $\nu$ 
if the ideal $\mathfrak a$ has no zeros. This follows from the fact that
under this assumption all
monomials of large degree belong to $\mathfrak a$, that is, the weak projective Nullstellensatz 
\ref{WPNSS}. Lasker proves it using his version of the resultant of $n-1$ forms in
$n$ variables.
Furthermore, if $u$ is a form of degree $d$, which is not a zero divisor in $S/\mathfrak a$,
then  $H_{(\mathfrak a,u)}(\nu) = \Delta_d H_\mathfrak a(\nu)$, where $\Delta_d$
is defined by $\Delta_d f(\nu)=f(\nu)-f(\nu-d)$ for a function $f$.

In Lasker's version of Proposition 
\ref{prime-zero} (the zero set $C$ of a prime ideal is irreducible
and every form vanishing on $C$ belongs to $\mathfrak p$) irreducibility has to be proved.
%
Suppose that the zero set of the prime ideal $\mathfrak p$ consists only
of points. If $u\notin \mathfrak p$ is a linear form then $H_\mathfrak p(u)=\Delta_1 H_\mathfrak p$.
If $u$ does not vanish in any of the points, then 
$H_{(\mathfrak p,u)}=0=\Delta_1 H_\mathfrak p$ and $H_\mathfrak p$ is constant. If there would exist
a form $u\notin \mathfrak p$ vanishing in one of the points, then $H_{(\mathfrak p,u)}\neq0$,
so such a form does not exist.
We conclude that  every linear form vanishing in one of the points vanishes in
all, which is only possible if there is  only one point, showing  irreducibility. And every form
vanishing in the point belongs to $\mathfrak p$.

If $\mathfrak p$ has dimension 1 and $u$ does not contain any 1-dimensional component 
of the zero set, then again $H_{(\mathfrak p,u)}=\Delta_1 H_\mathfrak p$, so $H_\mathfrak p$ a linear
function of $\nu$. If there exists a form $u\notin \mathfrak p$, vanishing on a 1-dimensional
component $C$, then  $H_{(\mathfrak p,u)}$ is independent of $\nu$. The forms in
$(\mathfrak p,u)$ all vanish on $C$ and therefore are all contained in the 
prime ideal $\Pi$ of forms vanishing on $C$. The Hilbert polynomial of $\Pi$
is linear, so $H_{(\mathfrak p,u)}$ cannot be constant.  
This shows that every form vanishing on  the zero set of $\mathfrak p$ belongs to $\mathfrak p$.
Irreducibility also follows: suppose on the contrary that there 
exist $a,b$ with $ab$ vanishing while $a$ and $b$ do not
vanish in all points. Let $a$ contain a 1-dimensional component, but not all zeros.
Then $a\notin \mathfrak p$, so such a form cannot exist, and therefore $a$ vanishes in all points.

In this way the Proposition can be shown by induction.

The proof of Proposition \ref{M32} uses (or rather proves)
general properties of primary ideals,
and is in this way closer to the modern approach than Macaulay's.

\begin{proof}[Lasker's proof of Proposition \ref{M32}]
Let $F_1,\dots, F_h$ be a basis of $\mathfrak p$. Form $F=p_1F_1+\dots+p_hF_h$
with indetermined coefficients, of suitable degree.
The ideal quotient $\mathfrak q'=\mathfrak q:(F)=\{a\in S\mid aF\in \mathfrak q\}$
can be found despite the fact that $F$ has indetermined coefficients,
as the condition  $aF\in \mathfrak q$ leads in each 
each degree  to only linear equations, which can be solved  with
indetermined coefficients.
Put $\mathfrak q''=\mathfrak q':(F)$,  $\mathfrak q'''=\mathfrak q'':(F)$ and so on. In this sequence every ideal
is contained in the next, so there is a number $k$ with $\mathfrak q^{(k)}=\mathfrak q^{(k+1)}$,
by the Ascending Chain Condition (proved from the Basis Theorem by Lasker \cite[p. 56]{L}).
Every ideal $\mathfrak q^{(i)}$ is $\mathfrak p$-primary. Lasker proves this by doing the example $\mathfrak q'$. 
Let $a\notin \mathfrak p$ and 
$ab\in \mathfrak q'$, so $Fab\in \mathfrak q$ and because $\mathfrak q$ is primary,
$Fb\in \mathfrak q$, giving $b\in \mathfrak q'$. Also the 
dimension of $\mathfrak q'$ is at most that of $\mathfrak p$. Therefore $\mathfrak q'$ is $\mathfrak p$-primary
(according to Lasker's definition).

Moreover, according to Lasker's definition, if an ideal $\mathfrak q$ is $\mathfrak p$-primary then the zero set 
of $\mathfrak q$ contains the zero set of $\mathfrak p$ or $\mathfrak q$ is the whole ring: if $a\in \mathfrak q$, but $a\notin \mathfrak p$ 
and if $f$ is an arbitrary form, then $a f\in \mathfrak q$ so $f\in \mathfrak q$.
Now the above constructed $F$ is not a zero divisor on $S/\mathfrak q^{(k)}$, 
so by the properties of the Hilbert
polynomial the dimension of $(\mathfrak q^{(k)},F)$ should be less than that of $\mathfrak q^{(k)}$.
The conclusion is that $\mathfrak q^{(k)}$ is the whole ring, so $1\in \mathfrak q^{(k)}$.

As $\mathfrak q^{(k)}=\mathfrak q^{(k-1)}:(F)$, we get $F\in \mathfrak q^{(k-1)}$,
and then $F^2\in \mathfrak q^{(k-2)}$, until finally $F^k\in \mathfrak q$.
As the coefficients of the $p_i$ are indeterminates we conclude that $F_1^k$, 
$F_1^{k-1}F_2$, \dots, $F_h^k$ lie in $\mathfrak q$. Therefore $f^k\in \mathfrak q$ for any
form $f=q_1F_1+\dots+q_hF_h$ in $\mathfrak p$.
\end{proof}

\section{Modern algebra}
In Moderne Algebra II \cite{MAII} van der Waerden gives two proofs of the
Nullstellensatz, the first one using Rabinowitsch' trick and proving the weak version by elimination theory. The second proof is based on \cite{vdW26}
and belongs therefore to the
proofs before Rabinowitsch.  It proves Proposition \ref{prime-zero}, and using
Noether's definition of primary ideals the Nullstellensatz follows as above.
In later editions, in Algebra II \cite{vdW67}, the  elimination theory proof is removed, and the
weak version is shown with same type of ideas as in the second proof, but 
avoiding primary decomposition. This proof was first described in \cite{vdW54}.

Whereas Noether in \cite{N21} still considered complex coefficients in the application to
algebraic geometry, she takes in later papers always  an arbitrary field as base
field. In \cite{vdW26} this point is stressed by a footnote (Footnote 13), stating that the
new definition holds for unusual spaces, like those where the fourth
harmonic point always coincides with the third; this happens in characteristic two:
if the first three points on the
line are normalised to be $0,\infty$ and $1$, then the fourth harmonic has
coordinate $-1$.

Let $K$ be a field and $R=K[x_1,\dots,x_n]$ the polynomial ring in 
$n$ variables over $K$. Consider points in the affine space $\A^n(L)$
with coordinates in an algebraic extension $L$ of $K$. Besides such points
one has also to consider \lq undetermined\rq\ points, where the coordinates
are indeterminates or algebraic functions of parameters, that is elements in
 a transcendental extension $\Omega$ of $K$.

Let therefore $\Omega=K(\xi_1,\dots,\xi_n) $ be a field extension. The polynomials
$f\in R$ for which $f(\xi_1,\dots,\xi_n)=0$ form a prime ideal $\mathfrak p$ in $R$:
if \[f(\xi_1,\dots,\xi_n)g(\xi_1,\dots,\xi_n)=0\]
and $g(\xi_1,\dots,\xi_n)\neq0$, then $f(\xi_1,\dots,\xi_n)=0$, as a field
does not contain zero divisors. 
Van der Waerden \cite{vdW26}
gives a simple example: let $\xi_1,\dots,\xi_n$ be linear functions
of one indeterminate $t$ with coefficients in $K$: 
\[
\xi_i=\alpha_i+\beta_i t\;.
\]
Then $\mathfrak p$ consists of all polynomials vanishing on the line given 
by the above parametrisation.
This example is not contained in Moderne Algebra II \cite{MAII}, but  occurs again in Algebra II
\cite{vdW67}.

The field $\Omega$ is isomorphic 
to the quotient field $\Pi$ of $R/\mathfrak p$, in such a way that the
$\xi_1$ correspond to the $x_i$.
Conversely, for 
every prime ideal $\mathfrak p\neq 0$ there exists a field
$\Omega=K(\xi_1,\dots,\xi_n)$ such that $\mathfrak p$ consists of
all polynomials
$f\in R$ for which $f(\xi_1,\dots,\xi_n)=0$.
The point $(\xi_1,\dots,\xi_n)$ is the general zero
of $\mathfrak p$.

The dimension of $\mathfrak p$ is the transcendence degree of $\Omega$
over $K$. Let  $t_1,\dots,t_r$ be a transcendence
basis of $\Omega$, so $\Omega$ is an algebraic extension of
the field of rational functions $(t_1,\dots,t_r)$. 
Let $f_1,\dots,f_s$ be elements of the function field $\Omega$. For given values
$\tau_1,\dots,\tau_r$ of the argument one can solve and find values
$\vp_1,\dots,\vp_s$ in a suitable extension of $k$, but only those systems of values are
allowed for which all relation $F(f_1,\dots,f_s,t_1,\dots,t_r)=0$ also hold for the 
specific values, that is  $F(\vp_1,\dots,\vp_s,\tau_1,\dots,\tau_r)=0$. For example,
if $f_1=\sqrt t$, $f_2=-f_1$, then we have the equations $f_1^2=t$ and  $f_2^2=t$,
giving for $t=1$ the values $\vp_1=\pm 1$ and  $\vp_2=\pm 1$, but it is not allowed
to combine $\vp_1= 1$ with  $\vp_2= 1$, as this violates the relation $f_1+f_2=0$
\cite[\S 88]{MAII}. The existence of such systems is shown by adjoining the $f_i$
successively. The denominators in the resulting monic equations for the $f_i$
can be taken to depend only on the $t_j$. Let $V(t_1,\dots,t_r)$ be the lowest
common multiple of the denominators. Consider only parameter values for which
$V(\tau_1,\dots,\tau_r)\neq 0$. But also the converse is valid:
if a relation $F(\vp_1,\dots,\vp_s,\tau_1,\dots,\tau_r)=0$ holds for all 
regular systems ofvalues for the arguments and all admissable corresponding
function values, then also the relation  $F(f_1,\dots,f_s,t_1,\dots,t_r)=0$ 
holds in the function field \cite[\S 88]{MAII}. 

Every system of algebraic functions $\xi_1,\dots,\xi_n$ of $t_1,\dots,t_r$
can be specialised in the above way  to  $\xi_1',\dots,\xi_n'$ and this determines
a point  $\xi'$ in affine space over a suitable
algebraic extension of $k$. Let $V$ be the Zariski closure of these points, that is the zero 
set of all polynomials $F$ for which $F(\xi_1',\dots,\xi_n')=0$. This means that 
 $\xi_1,\dots,\xi_n$ determines the algebraic variety $V$ in parameter form
and its prime ideal $\mathfrak p$ has the general zero $(\xi_1,\dots,\xi_n)$. As every prime ideal
$\mathfrak p$ has a general zero $(\xi_1,\dots,\xi_n)$, where the $\xi_i$ are 
algebraic functions of parameters $t_1,\dots,t_r$,  Proposition \ref{prime-zero} follows:
every prime ideal is the  ideal of its zero set.
In particular, the only prime ideal without zeros is the whole ring.

As application van der Waerden proves first the generalisation
of Noether's fundamental theorem to  zero dimensional ideals in arbitrary dimension.
König proved the theorem for the case of a complete intersection \cite[p. 385]{Koe03}, and 
Macaulay observed that the general case easily follows using primary decomposition
\cite[p. 61]{M16}.

\begin{thm}
Let $\mathfrak a$ be an ideal in $R=K[x_1,\dots,x_n]$  
with finitely many zeros $P_i$ in
$\A^n( K)$, $K$ algebraically closed.  
For a zero $P=(\xi_1,\dots,\xi_n)$ let $\mathfrak m_P=
(x_1-\xi_1,\dots,x_n-\xi_n)$.
There is an integer $\rho$ depending only on $\mathfrak a$
such that $f \in  \mathfrak a+\mathfrak m_{P_i}^\rho$  for  
all $i$ implies $f\in \mathfrak a$.
\end{thm}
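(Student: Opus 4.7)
The plan is to apply the Lasker--Noether primary decomposition together with Proposition \ref{M32} (which says a primary ideal contains a power of its associated prime). First I would write an irredundant primary decomposition $\mathfrak{a} = \mathfrak{q}_1 \cap \cdots \cap \mathfrak{q}_s$, with associated primes $\mathfrak{p}_i = \sqrt{\mathfrak{q}_i}$. Since $\mathfrak{a}$ has only finitely many zeros, I want to identify each $\mathfrak{p}_i$ as one of the maximal ideals $\mathfrak{m}_{P_j}$.

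The identification goes as follows: $V(\mathfrak{a}) = V(\mathfrak{p}_1) \cup \cdots \cup V(\mathfrak{p}_s)$ is by assumption a finite set of points, and each $V(\mathfrak{p}_i)$ is irreducible, hence a single point $P_{j(i)}$. By Proposition \ref{prime-zero}, $\mathfrak{p}_i$ equals the ideal of polynomials vanishing on $V(\mathfrak{p}_i)$, so $\mathfrak{p}_i = \mathfrak{m}_{P_{j(i)}}$. Conversely, every $P_j$ must appear (else $P_j$ would not lie in $V(\mathfrak{a})$), so after renumbering $\mathfrak{p}_i = \mathfrak{m}_{P_i}$ for each $i$. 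Proposition \ref{M32} then supplies integers $\rho_i$ with $\mathfrak{m}_{P_i}^{\rho_i} \subset \mathfrak{q}_i$; set $\rho = \max_i \rho_i$, so that $\mathfrak{m}_{P_i}^{\rho} \subset \mathfrak{q}_i$ for every $i$.

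The conclusion is then immediate. If $f \in \mathfrak{a} + \mathfrak{m}_{P_i}^{\rho}$ for every $i$, then since both $\mathfrak{a} \subset \mathfrak{q}_i$ and $\mathfrak{m}_{P_i}^{\rho} \subset \mathfrak{q}_i$, we have $\mathfrak{a} + \mathfrak{m}_{P_i}^{\rho} \subset \mathfrak{q}_i$ and hence $f \in \mathfrak{q}_i$. Intersecting over $i$ gives $f \in \bigcap_i \mathfrak{q}_i = \mathfrak{a}$. The only nontrivial step is the identification $\mathfrak{p}_i = \mathfrak{m}_{P_i}$, which rests on Proposition \ref{prime-zero}; once this is in hand, primary decomposition plus the bound from Proposition \ref{M32} yields the theorem with no further work.
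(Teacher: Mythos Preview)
Your proof is correct and follows essentially the same route as the paper's: primary decomposition, identification of the associated primes with the $\mathfrak{m}_{P_i}$, and Proposition~\ref{M32} to obtain the exponent $\rho$. You are more explicit than the paper in justifying why each associated prime equals some $\mathfrak{m}_{P_i}$ (invoking Proposition~\ref{prime-zero}), whereas the paper simply asserts this; the paper additionally notes the equality $\mathfrak{q}_i=\mathfrak{a}+\mathfrak{m}_{P_i}^{\rho_i}$, but, as your argument shows, only the inclusion $\mathfrak{a}+\mathfrak{m}_{P_i}^{\rho}\subset\mathfrak{q}_i$ is needed.
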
  

\begin{proof}
Let $\mathfrak a=\bigcap_i \mathfrak q_i$ be the primary decomposition of $\mathfrak a$. The associated
prime ideal of $\mathfrak q_i$ is $\mathfrak m_{P_i}$. For each $i$ there is an exponent
$\rho_i$ such that $\mathfrak m_{P_i}^{\rho_i}\subset \mathfrak q_i$ and then
$\mathfrak q_i=\mathfrak a+ \mathfrak m_{P_i}^{\rho_i}$. With $\rho = \max \rho_i$
the condition in the theorem implies that $f\in \mathfrak q_i$ for all $i$
and therefore $f\in \mathfrak a$.
\end{proof}

Lasker generalised Noether's theorem \cite[Satz XXVII]{L} to what Macaulay calls  the 
Lasker-Noether Theorem \cite[p. 61]{M16}. He formulates it roughly as follows. 

\begin{thm}
If $\mathfrak a=(F_1,F_2,\dots,F_k)$ and $F$ can be written as 
$F= P_1F_1+P_2F_2+\dots+P_kF_k$ , where the $P_i$ are power series, then there 
exists  a
polynomial $\phi$  not vanishing at the origin such that $F\phi \in \mathfrak a$.
\end{thm}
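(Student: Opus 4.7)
The plan is to reduce the statement to primary decomposition in the polynomial ring together with a local-to-global comparison with the ring of convergent power series at the origin, which was the framework Lasker developed. Let me write $\mathfrak{a} = \mathfrak{q}_1 \cap \cdots \cap \mathfrak{q}_r$ with $\mathfrak{p}_i = \sqrt{\mathfrak{q}_i}$, and let $\mathfrak{m}_0$ denote the maximal ideal of polynomials vanishing at the origin. Partition the primary components according to whether $\mathfrak{p}_i \subset \mathfrak{m}_0$ (components whose zero set passes through the origin) or $\mathfrak{p}_i \not\subset \mathfrak{m}_0$.

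First I would dispose of the components with $\mathfrak{p}_i \not\subset \mathfrak{m}_0$. For each such $i$ there is a polynomial $\phi_i \in \mathfrak{p}_i$ with $\phi_i(0) \neq 0$, and by Proposition \ref{M32} some power $\phi_i^{N_i}$ lies in $\mathfrak{q}_i$. Set
\[
\phi = \prod_{i\,:\,\mathfrak{p}_i \not\subset \mathfrak{m}_0} \phi_i^{N_i}.
\]
Then $\phi(0) \neq 0$ and $\phi \in \mathfrak{q}_i$ for every $i$ with $\mathfrak{p}_i \not\subset \mathfrak{m}_0$, so automatically $F\phi \in \mathfrak{q}_i$ for those $i$. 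It therefore suffices to prove that $F$ itself already belongs to each $\mathfrak{q}_i$ with $\mathfrak{p}_i \subset \mathfrak{m}_0$, since then $F\phi \in \bigcap_i \mathfrak{q}_i = \mathfrak{a}$.

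The crux is thus the following local statement: if $\mathfrak{q}$ is a primary polynomial ideal with $\sqrt{\mathfrak{q}} \subset \mathfrak{m}_0$, and if a polynomial $F$ admits a representation $F = \sum P_j G_j$ with $G_j \in \mathfrak{q}$ and $P_j$ convergent power series at the origin, then $F \in \mathfrak{q}$. This is where I would invoke Lasker's extension of primary decomposition to the ring $R$ of convergent power series at the origin: the extension $\mathfrak{q}R$ remains $\mathfrak{p}R$-primary when $\mathfrak{p} \subset \mathfrak{m}_0$, and the contraction $\mathfrak{q}R \cap k[x_1,\dots,x_n]$ coincides with $\mathfrak{q}$ itself. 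Granting this, the hypothesis $F \in \mathfrak{q}R$ forces $F \in \mathfrak{q}$.

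The main obstacle is precisely this contraction identity $\mathfrak{q}R \cap k[x_1,\dots,x_n] = \mathfrak{q}$ for $\mathfrak{p}$-primary $\mathfrak{q}$ with $\mathfrak{p} \subset \mathfrak{m}_0$. In modern terms this is faithful flatness of $R$ over the localisation $k[x_1,\dots,x_n]_{\mathfrak{m}_0}$, combined with the observation that $\mathfrak{q}$ equals its contraction from that localisation (because no element of $k[x_1,\dots,x_n] \setminus \mathfrak{m}_0$ lies in $\mathfrak{p}$, hence none becomes a zero-divisor modulo $\mathfrak{q}$). In Lasker's original framework one instead checks the analogue of Proposition \ref{M32} inside $R$ and transports the primary decomposition across the inclusion $k[x_1,\dots,x_n] \hookrightarrow R$, verifying that among the primary components of $\mathfrak{a}R$ only those coming from the $\mathfrak{q}_i$ with $\mathfrak{p}_i \subset \mathfrak{m}_0$ survive (the others pick up units and drop out). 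Once this local comparison is in place, the decomposition argument above closes the proof.
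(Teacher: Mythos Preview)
The paper does not actually give a proof of this theorem; it only states it, records the consequence that ``$F$ lies in every primary component containing the origin'', and then discusses the history of the various proofs (Lasker, Macaulay, Krull, van der Waerden), noting that the earliest attempts were incomplete and that the cleanest argument is Krull's. So there is no detailed proof in the paper to compare against.

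That said, your strategy is precisely the one the paper's remark points to, only run in the reverse direction: rather than deducing $F\in\mathfrak q_i$ (for $\mathfrak p_i\subset\mathfrak m_0$) from the existence of $\phi$, you prove $F\in\mathfrak q_i$ first via the local comparison and then manufacture $\phi$ from the remaining components. This is correct and is essentially Krull's argument. One small comment: you do not need the claim that $\mathfrak qR$ stays $\mathfrak pR$-primary, which is a delicate statement in its own right. All you need is the contraction identity $\mathfrak qR\cap k[x_1,\dots,x_n]=\mathfrak q$, and for that the faithful flatness of $R$ over $k[x_1,\dots,x_n]_{\mathfrak m_0}$ (together with your observation that $\mathfrak q$ contracts back from the localisation because $\mathfrak p\subset\mathfrak m_0$) already suffices. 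So you can drop the primality assertion and the argument still closes.
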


It follows that $F$ lies in every primary component containing the origin.  
For a criterion that $F\in \mathfrak a$ it suffices to impose
the power series condition in a finite number of points.

According to van der Waerden
 both Lasker's and Macaulay's proofs are insufficient; 
he adds a note in proof that the gaps
in proof by Macaulay are filled in correspondence between them \cite{vdW26}. 
The proof still needs
convergence of the power series involved, a condition not necessary in 
the proof van der Waerden says to have. The easiest proof seems to be due to 
Krull \cite{KrHB}, and it is this proof  which Macaulay gives in \cite{M34},
and refers to as a  hitherto unpublished result. This makes it probable that 
Macaulay learned it from van der Waerden.

A different generalisation is due to Hentzelt, and elaborated by Hermann \cite{Hermann}.
We give it in the formulation of Krull \cite[Nr. 20]{Kr-enz}.

\begin{thm}
For every ideal $\mathfrak a=(F_1,\dots,F_k)$  in $R=K[x_1,\dots,x_n]$  exists an exponent
$\rho$ depending only on $n$ and $k$ and the degrees of the $F_i$ such that 
$F\in \mathfrak a$, if $F\in \mathfrak a+\mathfrak p_i^\rho$ for all associated prime ideals
$\mathfrak p_i$  of $\mathfrak a$.
\end{thm}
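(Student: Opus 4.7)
My plan is to derive Hentzelt's Nullstellensatz from two ingredients already assembled in the paper: primary decomposition, which yields the existence of some $\rho$ for each particular $\mathfrak{a}$, and the effective elimination theory of Hentzelt--Noether--Hermann, which yields the required uniformity in $n$, $k$ and the degrees.

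First I would reduce the theorem to a statement about nilpotence exponents of associated primes modulo their primary components. Using the Lasker--Noether decomposition $\mathfrak{a}=\bigcap_i \mathfrak{q}_i$ with $\mathfrak{q}_i$ being $\mathfrak{p}_i$-primary, Proposition \ref{M32} supplies exponents $\rho_i$ with $\mathfrak{p}_i^{\rho_i}\subseteq \mathfrak{q}_i$. Setting $\rho=\max_i \rho_i$ one has $\mathfrak{a}+\mathfrak{p}_i^{\rho}\subseteq \mathfrak{q}_i$ for every $i$, so the hypothesis $F\in \mathfrak{a}+\mathfrak{p}_i^{\rho}$ for all $i$ forces $F\in\bigcap_i \mathfrak{q}_i=\mathfrak{a}$. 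This reduces the theorem to the uniformity claim: the exponents $\rho_i$ can be bounded by a function only of $n$, $k$ and $d=\max_i \deg F_i$.

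For the uniform bound I would follow Hermann's strategy. After a generic linear change of coordinates the generators may be assumed regular in $x_1$, and the Hentzelt--Noether step of Section \ref{kron-elim-thy} produces an ideal $\mathfrak{a}_1\subseteq K[x_2,\ldots,x_n]$ whose defining data (number of generators, degree bound, and exponents relating projections of primary components to their radicals) can be bounded by an explicit recursion from the data of $\mathfrak{a}$. Induction on $n$ reduces to the case of the base field, where the statement is trivial, and unfolding the recursion produces the desired, necessarily doubly exponential, function $\rho(n,k,d)$.

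The main obstacle is the combinatorial bookkeeping through the elimination recursion, in particular bounding the number of associated primes produced at each step and controlling the exponent $\rho_i$ for a primary component $\mathfrak{q}_i$ in terms of degree data for the associated prime $\mathfrak{p}_i$ and for the Hilbert function of $R/\mathfrak{q}_i$. A shorter but non-constructive alternative would be to consider the finite-type parameter space $Y$ of tuples of polynomials with fixed degree bounds: over $Y$ the associated primes and primary components of the universal ideal vary constructibly, so a finite stratification of $Y$ on which the exponents are constant yields a uniform $\rho$ by taking the maximum over strata. This route avoids Hermann's explicit bounds entirely but gives no effective value for $\rho$, which is precisely what her thesis was written to provide.
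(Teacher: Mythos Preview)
The paper does not prove this theorem. In the introduction the author announces ``Finally we formulate but do not prove Hentzelt's Nullstellensatz'', and after the statement the text only remarks that ``the hard part is to establish that the bound only depends on the stated quantities'' and then points to the literature: Hermann for the effective version and van der Waerden \cite{vdW26} for a non-constructive proof ``not establishing the degree bound'' which ``uses reduction to the zero dimensional case''. So there is nothing to compare your argument against inside the paper itself.

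Your first paragraph is correct and is exactly the argument the paper gives a few lines earlier for the zero-dimensional generalisation of Noether's fundamental theorem: primary decomposition plus Proposition~\ref{M32} yields some $\rho$ for each fixed $\mathfrak a$. This establishes the statement \emph{without} the uniformity clause, and the paper is explicit that this clause is the whole difficulty.

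Your two routes to uniformity are both only sketches, as you acknowledge. The Hermann route is historically the right one and the paper cites it; but ``bound the data of $\mathfrak a_1$ by a recursion from the data of $\mathfrak a$'' hides all of the work --- in particular controlling the number and degrees of the minors generating $\mathfrak a_1$, and then tracking how the exponents $\rho_i$ for primary components propagate through the elimination. Your parameter-space alternative is different from the non-constructive proof the paper cites (van der Waerden reduces to dimension zero rather than stratifying a moduli of coefficient tuples), and it has its own gap: primary decomposition is badly behaved in families, so the assertion that ``the associated primes and primary components of the universal ideal vary constructibly'' over $Y$ needs justification. One can make such a stratification argument work, but it is not automatic and requires at minimum generic flatness plus Noetherian induction on $Y$.
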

%
In this formulation the hard part is to establish that the bound only depends on the
stated quantities. To make clear that it is  a Nullstellensatz, the condition can be
formulated as
$F\in \mathfrak a R_i+(x_1-\xi_1,\dots,x_n-\xi_n)^\rho$, where 
$(\xi_1,\dots,\xi_n)$ is the general zero of the prime ideal $\mathfrak p_i$
and $R_i=K(\xi_1,\dots,\xi_n)[x_1,\dots,x_n]$.
Hentzelt originally formulated the condition for all (infinitely many) geometric zeros 
$(\xi_1,\dots,\xi_n)$ of $\mathfrak a$,
that $F\in \mathfrak a+(x_1-\xi_1,\dots,x_n-\xi_n)^\rho$.

A non-constructive proof, not establishing the degree bound, 
was given by van der Waerden \cite{vdW26}. It uses reduction to 
the zero dimensional case.  It is explained in \cite[\S 133]{vdW67}.

\appendix

\section{The resultant}\label{appendix}

Let $A$ be a unique factorisation domain. We are interested in the question
when two  binary forms $F(X,Y),G(X,Y)\in A[X,Y]$ have a common factor.

\begin{prop}\label{resprop}
The binary forms $F$ and $G$ in $A[X,Y]$
have a non-constant factor $H$ in common, 
if and only if
there exist forms $U$ and $V$ of degree less than $\deg F$, 
resp.~$\deg G$,
not both vanishing, such that 
$VF+UG=0$.
\end{prop}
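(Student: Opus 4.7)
The plan is to prove both implications using unique factorisation in $A[X,Y]$ (which is a UFD whenever $A$ is), and to exploit the fact that factors of a form are again forms, with degrees adding under multiplication.

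For the ``only if'' direction, I would simply take $F = H F_1$ and $G = H G_1$ with $H$ of positive degree. Then $F_1$ and $G_1$ are themselves forms, with $\deg F_1 = \deg F - \deg H < \deg F$ and $\deg G_1 < \deg G$. Setting $V = G_1$ and $U = -F_1$ gives
\[
VF + UG = G_1 \cdot H F_1 - F_1 \cdot H G_1 = 0,
\]
and neither $U$ nor $V$ is zero since $F_1, G_1$ are nonzero forms. This is essentially a bookkeeping step.

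For the ``if'' direction, I would assume $F, G$ are both nonzero (the degenerate cases being straightforward) and work from the relation $VF = -UG$. First I would rule out $V = 0$: if $V = 0$ then $UG = 0$, forcing $U = 0$ since $G \neq 0$, contradicting the hypothesis that $U, V$ are not both zero; the same argument (with roles swapped) gives $U \neq 0$. Now suppose, for contradiction, that $F$ and $G$ share no non-constant factor. Since $A[X,Y]$ is a UFD and factors of a form are forms (up to units in $A$), every irreducible form appearing in the factorisation of $G$ must divide the product $VF$; by coprimality with $F$, each such irreducible factor must then appear in $V$ with at least the same multiplicity. Hence $G \mid V$ in $A[X,Y]$, which forces $\deg V \geq \deg G$ unless $V = 0$; either way we obtain a contradiction with the hypothesis $\deg V < \deg G$.

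The main subtlety, and the step I would expect to require the most care, is the passage from $VF = -UG$ to $G \mid V$. This relies on two facts that should be stated cleanly: (i) the graded UFD property of $A[X,Y]$, so that the irreducible factors of a form are themselves forms, and (ii) the behaviour of degree under multiplication, which turns ``no non-constant common factor'' into coprimality in the strict divisibility sense. Apart from this, everything is a direct calculation; in particular no resultant theory is needed for the proposition itself, although the proposition is of course the first step toward constructing the resultant of $F$ and $G$.
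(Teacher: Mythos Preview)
Your proof is correct and follows the same approach as the paper's. The paper's argument for the ``if'' direction is phrased directly rather than by contradiction (the irreducible factors of $F$ must all appear in $UG$, and since $\deg U<\deg F$ not all of them can sit inside $U$, so one divides $G$), whereas you argue the contrapositive via $G\mid V$; these are symmetric versions of the same unique-factorisation count, and your handling of the degenerate cases $U=0$ or $V=0$ is a bit more explicit than the paper's.
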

 
\begin{proof}
Suppose $VF=-UG$.
All irreducible factors of $F$ have to occur in $UG$, and not all can occur in
$U$, because $\deg U < \deg F$; therefore $F$ and $G$ have a factor in
common. Conversely, given $H$ one finds a $U$ and a $V$
such that $F=-UH$ and $G=VH$, so the equation $VF+UG=0$ is satisfied,
with $\deg U<\deg F$ and $\deg V<\deg G$.
\end{proof}

Suppose $\deg F=m$ and $\deg G=n$ and
consider the free module $A[X,Y]_{n+m-1}$ of forms of degree $m+n-1$,

The existence of a relation  $VF+UG=0$ is equivalent to the fact that
the forms $X^{n-1}F$, $X^{n-2}YF$, \dots, $Y^{n-1}F$, 
$X^{m-1}G$, \dots, $Y^{m-1}G$
are linearly dependent in vector space 
$Q(A)[X,Y]_{n+m-1}$ of dimension $m+n$, where $Q(A)$ is the quotient field of $A$. 
We represent a form
 $c_0X^{n+m-1}+ \dots + c_{n+m-1}Y^{n+m-1}$
 by the row vector $(c_0,\dots,c_{n+m-1})$; multiplying with the column vector $\mathcal{X}=(X^{n+m-1},\dots,Y^{n+m-1})^t$ gives back the form. 
 
Put
\begin{align*}
F&=a_0X^m+a_1X^{m-1}Y+\dots+a_mY^m, \\
G&=b_0X^n+b_1X^{n-1}Y+\dots+b_nY^n.
\end{align*}
 Writing out the forms  $X^{n-1}f$,  \dots, $Y^{n-1}F$, 
$X^{m-1}G$, \dots, $Y^{m-1}g$ in the  basis 
$X^{n+m-1},\dots,Y^{n+m-1}$ leads in this way to a matrix equation  $S_{F,G} 
\mathcal{X} = \mathcal{F}$, with $\mathcal{F}=(X^{n-1}f,  \dots,Y^{m-1}g)^t$
and $S_{F,G}$ the Sylvester matrix
\[
S_{F,G}  =
\begin{pmatrix}
a_0 &  a_1  & a_2 & \dots &a_m &  \\
& a_0 &  a_1  & a_2 & \dots &a_m &  \\
&&\multispan 5\dotfill\hphantom{n}\\
&&&a_0 &  a_1  & a_2 & \dots &a_m   \\
b_0 &  b_1  & \dots &b_{n-1}&b_n &  \\
&b_0 &  b_1   & \dots &b_{n-1}&b_n   \\
&&\multispan 5\dotfill\hphantom{nn}\\
&&&b_0 &  b_1   & \dots &b_{n-1}&b_n   
\end{pmatrix}\,.
\]
The determinant  $R(F,G)$ of this matrix is called the \emph{resultant}
of $F$ and $G$.

From Proposition \ref{resprop} follows:

\begin{prop}\label{result}
The forms $G$ and $G$ have a non-constant factor $H$ in common, 
if and only if $R(F,H)=0\in A$.
\end{prop}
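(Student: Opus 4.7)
The plan is to prove Proposition \ref{result} by combining Proposition \ref{resprop} with the Sylvester-matrix construction that immediately precedes it. First I would translate the existence of a nontrivial relation $VF+UG=0$, with $\deg V<n$ and $\deg U<m$, into a linear-algebra statement. Writing
\[
V=v_0 X^{n-1}+v_1 X^{n-2}Y+\dots+v_{n-1}Y^{n-1},\qquad U=u_0 X^{m-1}+\dots+u_{m-1}Y^{m-1},
\]
the relation $VF+UG=0$ says exactly that the $m+n$ forms $X^{n-1}F,X^{n-2}YF,\dots,Y^{n-1}F,X^{m-1}G,\dots,Y^{m-1}G$ are $A$-linearly dependent in the free $A$-module $A[X,Y]_{m+n-1}$ of rank $m+n$, the dependence coefficients being $(v_0,\dots,v_{n-1},u_0,\dots,u_{m-1})\in A^{m+n}$. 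By Proposition \ref{resprop}, this is equivalent to $F$ and $G$ sharing a non-constant common factor.

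Next, by the very construction of $S_{F,G}$, the rows of $S_{F,G}$ are the coordinate vectors of these $m+n$ forms with respect to the monomial basis $X^{m+n-1},\dots,Y^{m+n-1}$ of $A[X,Y]_{m+n-1}$. Therefore a row vector $(v_0,\dots,v_{n-1},u_0,\dots,u_{m-1})$ produces the relation $VF+UG=0$ iff it lies in the left kernel of $S_{F,G}$. Over the quotient field $Q(A)$, which is legitimate since $A$ is a UFD (hence an integral domain), the existence of a nontrivial element in the left kernel of the square matrix $S_{F,G}$ is equivalent to the vanishing of its determinant $R(F,G)$.

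The remaining step is the passage between $A$ and $Q(A)$: a nontrivial $Q(A)$-linear relation among row vectors with entries in $A$ clears denominators to a nontrivial $A$-linear relation, and conversely, so $A$-dependence and $Q(A)$-dependence coincide. Likewise $R(F,G)\in A$ is zero in $A$ iff it is zero in $Q(A)$, because $A$ embeds in its quotient field. Chaining these equivalences,
\[
F,G\text{ share a non-constant factor}\iff S_{F,G}\text{ has nontrivial left kernel}\iff R(F,G)=0,
\]
which proves Proposition \ref{result}. The only real obstacle is the bookkeeping distinction between $A$ and $Q(A)$; this is handled once and for all by the integral-domain hypothesis built into the UFD assumption, so no substantive difficulty remains.
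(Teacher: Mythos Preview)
Your proof is correct and follows exactly the approach the paper intends: the text between Proposition~\ref{resprop} and Proposition~\ref{result} already sets up the translation of $VF+UG=0$ into linear dependence of the rows of $S_{F,G}$ over $Q(A)$, and the paper then simply states that Proposition~\ref{result} follows. You have supplied the details (including the clearing-denominators passage between $A$ and $Q(A)$) that the paper leaves implicit, but the argument is the same.
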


If the resultant does not vanish, the matrix equation $S_{F,G}   
\mathcal{X} = \mathcal{F}$ can be ``solved'' by inverting the matrix
$S_{F,G}  $. To avoid denominators it is better to multiply with the
adjugate matrix.

\begin{prop}\label{polxres-in-id}
For any two forms $F$ and $G$ of degrees $m$ and $n$ in $A[X,Y]$ and each
monomial $X^{k-1}Y^{n+m-k}$
there exist forms
$U$ and $V$  such that:
\[
VF+UG=X^{k-1}Y^{n+m-k}R(F,G).
\]
\end{prop}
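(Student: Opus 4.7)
The plan is to exploit the matrix identity $S_{F,G}\,\mathcal{X}=\mathcal{F}$ already set up just above the statement, where $\mathcal{X}=(X^{n+m-1},X^{n+m-2}Y,\dots,Y^{n+m-1})^t$ and $\mathcal{F}=(X^{n-1}F,X^{n-2}YF,\dots,Y^{n-1}F,X^{m-1}G,\dots,Y^{m-1}G)^t$. Multiplying on the left by the adjugate matrix $\operatorname{adj}(S_{F,G})$ keeps everything over $A$ (no denominators) and yields
\[
R(F,G)\,\mathcal{X}=\operatorname{adj}(S_{F,G})\,\mathcal{F},
\]
since $\operatorname{adj}(S_{F,G})\,S_{F,G}=\det(S_{F,G})\,I=R(F,G)\,I$.

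Next I would read off the $k$-th row. On the left one obtains exactly $R(F,G)\,X^{k-1}Y^{n+m-k}$. On the right one obtains an $A$-linear combination of the entries of $\mathcal{F}$, that is, an expression of the shape
\[
\sum_{i=0}^{n-1}\alpha_i X^{n-1-i}Y^{i}F+\sum_{j=0}^{m-1}\beta_j X^{m-1-j}Y^{j}G
\]
with $\alpha_i,\beta_j\in A$ given (up to sign) by the appropriate cofactors of $S_{F,G}$. Collecting the two sums gives $V F+U G$ with
\[
V=\sum_{i=0}^{n-1}\alpha_i X^{n-1-i}Y^{i},\qquad U=\sum_{j=0}^{m-1}\beta_j X^{m-1-j}Y^{j},
\]
which are forms in $A[X,Y]$ of degrees $n-1$ and $m-1$ respectively. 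This is the desired identity $VF+UG=X^{k-1}Y^{n+m-k}R(F,G)$.

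There is essentially no obstacle: the argument is pure linear algebra over the commutative ring $A$, and the only thing one has to be careful about is that the cofactor expansion produces coefficients in $A$ (not in $Q(A)$), which is exactly why one multiplies by the adjugate rather than inverts the Sylvester matrix. Since the entries of $S_{F,G}$ are the coefficients of $F$ and $G$ in $A$, both $U$ and $V$ automatically have coefficients in $A$, and the fact that each entry of $\mathcal{F}$ is homogeneous of degree $n+m-1$ guarantees that the resulting $U$ and $V$ are genuine forms of the stated degrees, so Proposition \ref{polxres-in-id} follows at once.
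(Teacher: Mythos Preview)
Your proof is correct and follows exactly the approach the paper indicates in the paragraph immediately preceding the proposition: multiply the identity $S_{F,G}\mathcal{X}=\mathcal{F}$ on the left by the adjugate of the Sylvester matrix to obtain $R(F,G)\,\mathcal{X}=\operatorname{adj}(S_{F,G})\,\mathcal{F}$ without introducing denominators, then read off the row corresponding to the desired monomial. The paper gives only this hint and states the proposition without further proof; you have simply written out the details (up to the harmless slip that the monomial $X^{k-1}Y^{n+m-k}$ sits in row $n+m-k+1$ of $\mathcal{X}$ rather than row $k$).
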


The above results also apply to the inhomogeneous case, by putting $Y=1$. If the 
(homogeneous) resultant vanishes, there is a common factor, but this factor can be a power
of $Y$. This means in particular that $a_0=b_0=0$, so the degree of the inhomogeneous
polynomials $f(x)=F(X,1)$ and $g(x)=G(X,1)$ are lower than $m$ and $n$. 
This situation can occur if 
the coefficients are variables, which can be specialised to specific values. We 
call $m$ the formal degree of the polynomial $f$ \cite[\S 34]{vdW71Alg}.
The resultant $R(f,g)$ of two polynomials $f(x)$ and $g(x)$ of formal degrees 
$m$ and $n$ vanishes if and only if $f$ and $g$ have a non-constant factor in common
or in both polynoials the highest coefficient vanishes.

For inhomogeneous polynomials one obtains from Proposition \ref{polxres-in-id}:
\begin{prop}\label{res-in-id}
For any  
two polynomials $f$ and $g$ there exist  polynomials
$u$ and $v$ with $\deg u <\deg
f$ and $\deg v <\deg g$, such that:
\[
vf+ug=R(f,g).
\]
\end{prop}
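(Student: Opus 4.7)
The plan is to deduce the inhomogeneous statement from its homogeneous counterpart, Proposition~\ref{polxres-in-id}, by specialising $Y=1$. Put $m=\deg f$ and $n=\deg g$, and let $F(X,Y)=Y^m f(X/Y)$, $G(X,Y)=Y^n g(X/Y)$ be the homogenisations. The Sylvester matrix $S_{F,G}$ is built from the coefficients of $F$ (respectively $G$) written as a polynomial in $X$, which are by construction the coefficients of $f$ (respectively $g$). So $S_{F,G}$ equals the Sylvester matrix of the pair $(f,g)$, and in particular $R(F,G)=R(f,g)\in A$.

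Next, I would invoke Proposition~\ref{polxres-in-id} in the special case $k=1$, where the monomial on the right-hand side is $X^0Y^{n+m-1}=Y^{n+m-1}$. This produces forms $U,V\in A[X,Y]$ with
\[
V F + U G \;=\; Y^{n+m-1}\,R(F,G).
\]
I would then substitute $Y=1$. Setting $v(x):=V(x,1)$ and $u(x):=U(x,1)$ and using $R(F,G)=R(f,g)$, one obtains the desired identity
\[
v\,f + u\,g \;=\; R(f,g).
\]

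It remains to check the degree bounds. These are implicit in the adjugate-matrix derivation of Proposition~\ref{polxres-in-id}: multiplying $S_{F,G}\mathcal{X}=\mathcal{F}$ on the left by $\mathrm{adj}(S_{F,G})$ expresses each component of $R(F,G)\,\mathcal{X}$ as an $A$-linear combination of the rows of $\mathcal{F}=(X^{n-1}F,\dots,Y^{n-1}F,X^{m-1}G,\dots,Y^{m-1}G)^t$. Consequently the form $V$ is an $A$-linear combination of the monomials $X^{n-1},X^{n-2}Y,\dots,Y^{n-1}$, hence a form of degree $n-1$, and similarly $U$ is a form of degree $m-1$. Dehomogenising therefore gives $\deg v\le n-1<\deg g$ and $\deg u\le m-1<\deg f$, as required.

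The argument has no real obstacle; the only point that calls for care is tracking the degrees of $U$ and $V$ in Proposition~\ref{polxres-in-id}, which are not stated there explicitly but are forced by the Sylvester/adjugate construction. Once that bookkeeping is made explicit, the specialisation $Y=1$ delivers Proposition~\ref{res-in-id} immediately.
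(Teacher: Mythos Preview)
Your proposal is correct and follows exactly the approach the paper indicates: the paper simply states that Proposition~\ref{res-in-id} is obtained from Proposition~\ref{polxres-in-id} by putting $Y=1$, and you carry this out in detail. Your additional verification of the degree bounds via the adjugate construction is sound and makes explicit what the paper leaves implicit.
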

The resultant is in fact a universal polynomial
in the $a_i$ and $b_j$ with integral coefficients and the above relation
also holds for indeterminate $a_i$ and $b_j$.

\small
\providecommand{\noopsort}[1]{}
\providecommand{\bysame}{\leavevmode\hbox to3em{\hrulefill}\thinspace}
\providecommand{\href}[2]{#2}

\end{document}